\definecolor{lime}{HTML}{A6CE39}
\DeclareRobustCommand{\orcidicon}{%
	\begin{tikzpicture}
	\draw[lime, fill=lime] (0,0) 
	circle [radius=0.16] 
	node[white] {{\fontfamily{qag}\selectfont \tiny ID}};
	\draw[white, fill=white] (-0.0625,0.095) 
	circle [radius=0.007];
	\end{tikzpicture}
	\hspace{-2mm}
}
\xdef\csname orcid\x\endcsname{\noexpand\href{https://orcid.org/\csname orcidauthor\x\endcsname}{\noexpand\orcidicon}}
\title{Algorithmic Construction of Real Hyperfields from Minimal Axioms}
\subjclass{}
\keywords{}
\author[D.\ E.\ K\k{e}dzierski, K.\ Kuhlmann, H. Stoja\l owska]{D.\ E.\ K\k{e}dzierski \orcidA{} \and K.\ Kuhlmann \orcidB{} \and H.\ Stoja\l owska \orcidC{}}
\subjclass[2020]{16Y20, 12K99, 12E20, 20N20, 08A05}
\thanks{}
\keywords{hyperfield, real hyperfield, classification of hyperfields, algebraic structures, axiomatization, minimal axioms, enumeration algorithms, pseudocode. mathematical software, computational algebra}
\address{}
\address{}
\date{25.08.2025}
\newcommand{\Desc}[2]{\State \makebox[2em][l]{#1}#2}
\newcommand{\HH}{\mathbb{H}}
\newcommand{\KK}{\mathbb{K}}
\newcommand{\SSS}{\mathbb{S}}
\newcommand{\Char}{\text{char}}
\newcommand{\CChar}{\text{C-char}}
\newcommand{\lra}{\longrightarrow}
\renewcommand{\ss}{\underline{s}}
\newcommand{\podwzorem}[2]{\underbrace{#1}\limits_{#2}}
\newtheorem{thm}{Theorem}[section]
\newtheorem{cor}[thm]{Corollary}
\newtheorem{lem}[thm]{Lemma}
\newtheorem{prop}[thm]{Proposition}
\newtheorem{ex}[thm]{Example}
\theoremstyle{definition}
\newtheorem{df}[thm]{Definition}
\newtheorem{rem}[thm]{Remark}
\newcommand{\shift}{\mathrm{shift}}
\begin{document}

\maketitle
\begin{abstract}
We study real hyperfields, focusing in particular on those that are finite with cyclic positive cones. All real hyperfields have characteristic zero, although they can still be classified using the C-characteristic, an invariant that captures essential structural information. We present an algorithm to determine all such hyperfields up to isomorphism and compute their C-characteristic. The algorithm is optimal in the sense that the set of axioms used is minimal. We develop and implement this algorithm in software, enabling a complete classification of finite real hyperfields with cyclic positive cones of order up to 15, as well as identification of the C-characteristic that occur in such hyperfields of order up to 17. Restricting attention to finite hyperfields of cyclic positive cones enables substantial simplification of the algorithm, thereby enhancing its computational efficiency and allowing for the rapid generation of hyperfields of large order. Using a criterion that allows us to determine whether a given finite real hyperfield is a Krasner quotient hyperfield, we obtain many new examples of hyperfields that do not arise from Krasner's quotient construction.
\end{abstract}

\section{Introduction}

Hyperfields are a generalization of fields, equipped with a multivalued addition operation. They were introduced by Krasner in \cite{Krasner1956} as a tool for describing local fields of positive characteristic as limits of local fields of characteristic zero.
Over the years, the theory of hyperfields -- as well as other hyperstructures -- has been actively developed and has found applications in various branches of mathematics, including quadratic form theory \cite{GladkiMarshall2017}, the adele class space \cite{ConnesConsani2011}, valuation theory and model theory \cite{Lee2020, Linzi2023, LinziTouchard2022}, tropical geometry \cite{Lorscheid2022, Viro2010}, algebraic combinatorics \cite{AndersonDavis2019, BakerBowler2019, EppolitoJunSzczesny2020}, and algebraic geometry \cite{Jun2018, Jun2021}. Hyperstructures have also been applied in automata theory and computer science \cite{ChGMassouros2020, GMassouros2003}.

Hyperfields arise naturally via a simple construction. Let $K$ be a field and $H$ a subgroup of its multiplicative group $K^*$. Consider the set of cosets $K/H$ with the natural multiplication and the multivalued addition defined by:
\[
aH + bH = \{cH \mid c \in aH + bH\}.
\]
This construction (for hyperrings) was introduced by Krasner in  \cite{Krasner1983}, and the resulting class of hyperfields is known as Krasner quotient hyperfields. In \cite{ChMassouros1985}, Massouros provided the first example of a hyperfield that cannot be obtained as such a quotient. To this day, determining whether a given hyperfield is a quotient hyperfield remains an open problem in many cases.

In \cite{FrigoLheemLiu2018}, Frigo, Lheem, and Liu studied the characteristic of Krasner quotient hyperfields of the form $\mathbb{F}_p / G$, where $G$ is a subgroup in $\mathbb {F}_p^*$, and computed the characteristic for all such hyperfields with $p < 200$. Unlike in the case of fields, the characteristic of a hyperfield does not need to be a prime number and every natural number greater than 1 can be realized as the characteristic of some infinite hyperfield (see Theorem 3 in \cite{KeLS2023}). However, the question of which characteristics can be realized in finite hyperfields remains open.

In \cite{BakerJin2021}, Baker and Jin provided a complete classification of hyperfields of orders 2, 3, and 4, identifying the number of non-isomorphic classes in each case and determining which numbers arise from Krasner's quotient construction. In \cite{Ameri2020}, Ameri, Eyvazi, and Hoskova-Mayerova developed a computational algorithm that enabled them to classify all hyperfields of orders less than 7 up to isomorphism. They noted that while the algorithm could in principle handle hyperfields of higher orders, it would require substantial computational resources. In \cite{ChMassouros2024} Ch. Massouros and G. Massouros enumerated all hyperfields of order 7 using the fact that the reversibility axiom is deducible in hyperfields from other axioms.

Real hyperfields, introduced by Marshall in \cite{Marshall2006}, are a generalization of real fields (i.e., orderable fields). The theory of real hyperfields has been further developed in several works (see, e.g., \cite{GladkiMarshall2012}, \cite{KLS2022}, \cite{MaxwellSmith2023}).

The set of positive elements of a real hyperfield forms a multiplicative subgroup, called the {\it positive cone}. In this paper, we focus on finite real hyperfields whose positive cones are cyclic subgroups. We present an algorithm for determining all such hyperfields up to isomorphism. The specificity of this class enables us to compute hyperfields of relatively high orders efficiently. In particular, we provide a complete classification of real hyperfields with cyclic positive cones of cardinality up to $13$.

Although all real hyperfields have characteristic zero, they can still be classified using the so-called {\it C-characteristic} (see Definition~\ref{def_char}). We compute the C-characteristics that appear in our class of real hyperfields of cardinality up to $17$. This allows us, via Theorem~\ref{thm_cchar>1}, to identify many new examples of hyperfields that are not Krasner quotient hyperfields.

The paper is organized as follows. In Section~2, we recall the definitions and fundamental properties of hyperfields, real hyperfields, and morphisms between them. Section~3 provides the theoretical foundations of our algorithm for generating finite real hyperfields. In particular, we show that the hyperaddition in a hyperfield with positive cone $P$ is uniquely determined by a family $\{A_x\}_{x \in P}$, where each subset $A_x$ represents the set $1 + x$. Conversely, we present conditions under which a given family of subsets defines a real hyperfield. We also characterize when an isomorphism of the multiplicative groups induces a strict isomorphism of real hyperfields.

In Section~4, we focus on finite real hyperfields with cyclic positive cones. We provide an algorithm to determine such hyperfields and prove its correctness. Furthermore, we establish the conditions under which two hyperfields constructed in this manner are isomorphic.

Section~5 presents the pseudocode of the algorithm described in Section~4. Section~6 contains the results obtained using our generation algorithm. The complete results for real hyperfields of cardinality up to $17$ can be found at: \url{https://zenodo.org/records/16737218}

\section{Preliminaries}

Let $G$ be a nonempty set and $P^*(G)$ the family of nonempty subsets of $G$. A \emph{hyperoperation} is a function 
\[
+ : G \times G \rightarrow P^*(G)
\]
that associates with every pair $(x,y)$ a nonempty subset of $G$ denoted by $x+y$.
For a subset $A \subseteq G$ and $x \in G$ we define
\[
A + x := \bigcup_{a \in A} a+x \hspace{0,5cm} \textnormal{and} \hspace{0,5cm} x + A := \bigcup_{a \in A} x+a.
\]

The notion of a hyperfield was introduced by Krasner in \cite{Krasner1956}. 

\begin{df} \label{def_hyperfield}
A \emph{hyperfield} is a tuple $(H,+, \cdot,0, 1)$, where $+:  H \times H \rightarrow P^*( H)$ is a hyperoperation, $(H \setminus \{ 0 \}, \cdot, 1)$ is an abelian group, and $x \cdot 0 = 0 \cdot x = 0$ for every $x \in  H$, and the following axioms hold:
\begin{itemize}
\item[$(h_1)$] $(x+y)+z=x+(y+z)$ for all $x,y,z \in  H$,
\item[$(h_2)$] $x+y=y+x$ for all $x,y\in  H$,
\item[$(h_3)$] for every $x\in H$ there exists a unique $-x\in H$ such that $0\in x+(-x)$ (the element $-x$ is called an inverse of $x$),
\item[$(h_4)$] $z\in x+y$ implies $x\in -y+z$ for all $x,y,z\in  H$,
\item[$(h_5)$] $z(x+y) = zx+zy$ for all $x,y,z\in  H$.
\end{itemize}
\end{df}
\begin{rem}
    In \cite{ChMassouros2024} Ch. Massouros and G. Massouros showed that axiom $(h_4)$ follows from the other axioms. However, we are not going to omit the axiom $(h_4)$ since we will significantly weaken the remaining ones.
    \end{rem}   
    
The following properties of hyperfields follow from the definition above and will be used freely throughout the rest of the paper.
\begin{prop} \label{prop_hyperfield properties} 
    A hyperfield $H$ has the following properties for every $x, y, z, t \in H$:
    \begin{itemize}
    \item[(i)] $x+0 = \{ x \}=0+x$,
    \item[(ii)] $\bigcup_{x \in  H} (1+x)=  H$,
    \item[(iii)]  $x+y = x(1+x^{-1}y) = (1+xy^{-1})y$ for every $x, y \neq 0$,
    \item[(iv)] $(1+x)+y=(1+y)+x$, 
    \item[(v)]  $z \in x+y$ if and only if $-y \in x+ (-z)$,
    \item[(vi)]  $z \in x+y$ if and only if $x\in z-y$.
    \item[(vii)] $t\Big( \big(x+y\big)+ z\Big)= \big(tx+ty\big)+ tz$.
    \end{itemize}
\end{prop}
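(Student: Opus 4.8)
The plan is to derive all seven properties from the axioms $(h_1)$--$(h_5)$, taking them in a convenient logical order rather than the order listed. I would begin with the ``sign rules''. Applying $(h_5)$ with the scalar $0$ to the sum $x+(-x)$ gives $0+0 = 0\cdot x + 0\cdot(-x) = 0\cdot(x+(-x)) = \{0\}$, since $0\cdot a = 0$ for every $a$; in particular $0\in 0+0$, so the uniqueness clause of $(h_3)$ forces $-0 = 0$. Similarly, $0\in x+(-x)$ together with $(h_2)$ gives $0\in(-x)+x$, so $x$ is the element whose existence and uniqueness $(h_3)$ asserts relative to $-x$; hence $-(-x) = x$ for all $x$.

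The first substantial point is property (i), because the fact that $0$ is a singleton neutral element for $+$ is \emph{not} assumed. I would first show $1+0 = \{1\}$: pick any $c\in 1+0 = 0+1$; applying $(h_4)$ to $c\in 0+1$ (with $z=c$, $x=0$, $y=1$) yields $0\in(-1)+c$. But $(h_3)$ applied to the element $-1$ says there is a \emph{unique} element $w$ with $0\in(-1)+w$, and by the sign rule $w = -(-1) = 1$. Hence every $c\in 1+0$ equals $1$, so $1+0 = \{1\}$. Then for $x\neq 0$, $(h_5)$ gives $x+0 = x\cdot 1 + x\cdot 0 = x(1+0) = \{x\}$; the case $x=0$ was handled above, and $0+x = x+0$ is $(h_2)$.

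With (i) and the sign rules in hand the remaining properties are short. For (ii): given $y\in H$, choose $a\in y+(-1)$ and apply $(h_4)$ to get $y\in -(-1)+a = 1+a$, so $y\in\bigcup_x(1+x)$; the reverse inclusion is trivial. Property (iii) is $(h_5)$ read right-to-left, using commutativity of $\cdot$ and $(h_2)$: $x(1+x^{-1}y) = x\cdot 1 + x\cdot x^{-1}y = x+y$ and likewise $(1+xy^{-1})y = y+x = x+y$. Property (iv) is $(1+x)+y = 1+(x+y) = 1+(y+x) = (1+y)+x$ by $(h_1)$ and $(h_2)$. For (v) and (vi) the workhorse is the implication $a\in b+c \Rightarrow -c\in b+(-a)$, obtained by applying $(h_4)$ twice (first to $a\in b+c$, then to the resulting membership $b\in(-c)+a$, with a commutativity swap by $(h_2)$); (v) then follows by applying this implication in each direction (using $-(-w)=w$), and (vi) follows from a single application of $(h_4)$ in each direction. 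Finally (vii) comes from unwinding the definition of the hypersum of a set with an element: both $t((x+y)+z)$ and $(tx+ty)+tz$ reduce to $\bigcup_{w\in x+y}(tw+tz)$ via $(h_5)$ (the degenerate case $t=0$ being immediate from $0\cdot S=\{0\}$ and (i)).

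I expect the only real obstacle to be the argument for (i): one must resist the temptation to ``use'' neutrality of $0$ and instead notice that reversibility $(h_4)$ already pins down $1+0$, because every element of it is an additive inverse of $-1$ and such an inverse is unique by $(h_3)$; distributivity $(h_5)$ then propagates the singleton $\{1\}$ to all of $H$. Once that is in place, the rest is routine bookkeeping with $(h_1)$, $(h_2)$, $(h_4)$, $(h_5)$ and the sign rules.
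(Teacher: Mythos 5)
Your proof is correct. The paper itself offers no proof of this proposition (it merely asserts that the properties ``follow from the definition'' and uses them freely), so there is no argument to compare against; your derivation --- in particular the key step that $(h_4)$ together with the uniqueness clause of $(h_3)$ forces $1+0=\{1\}$, after which $(h_5)$ propagates the singleton to $x+0=\{x\}$ for all $x$ --- is the standard one and correctly fills in all seven items, including the degenerate $t=0$ case of (vii).
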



Below we present some basic examples of hyperfields. 

\begin{ex} \label{ex_hyperfields}
\mbox{}
\begin{enumerate}
    \item Every field can be turned into a hyperfield if we identify the element $x+y$ with a singleton $\{ x+y\}$. 
    \item Consider the set $K:=\{ 0,1\}$ with the usual multiplication and the hyperaddition $+$ defined as follows:
    \begin{center}
    {\small
\begin{tabu}{|c|[1.3pt]c|c|}
\hline
$ +$ & $0$ &$ 1$ \\ \hline
$ 0$ &$ \{0\}$ & $\{1\}$ \\ \hline
 $1$ & $\{1\}$  & $\{0,1\}$ \\ \hline
\end{tabu} }
\end{center}
Then $\KK: = (K, +, \cdot, 0, 1)$ is a hyperfield called the \emph{Krasner hyperfield}.
\item Consider the set $S:=\{ -1, 0,1\}$ with the usual multiplication and the hyperaddition $+$ defined as follows:
 \begin{center}
 {\small
\begin{tabu}{|c|[1.3pt]c|c|c|}
\hline
 $+$ & $-1$ & $0$ & $1$  \\ \hline
 $-1$ & $\{-1\}$ & $\{-1\}$ & $\{-1,0,1\}$  \\ \hline
 $0$ &$\{-1\}$ & $\{0\}$ & $\{1\}$ \\ \hline
 $1$ &$\{-1,0,1\}$ & $\{1\}$  & $\{1\}$ \\ \hline
\end{tabu} }
\end{center}
Then $\SSS := (S, +, \cdot, 0, 1)$ is a hyperfield called the \emph{hyperfield of signs}.
\item Consider the multiplicative group $H^*=\{ -1,1\}\times \{1,a,a^2\}$. 
Let $H:= H^* \cup \{ 0 \}$. We equip the set $H$ with the following hyperaddition:
\begin{center}
\resizebox{\textwidth}{!}{
\begin{tabu}{|c|[1.3pt]c|c|c|c|c|c|c|}
\hline
    + & $-a^2$ & $-a$ & $-1$ & $0$ & $1$ &$a$ &$a^2$ \\ \tabucline[1.3pt]{-}
    $-a^2$ &$\{-a^2, -1 \}$&$ \{ -a, -1 \} $&$ \{-a^2, -a \} $&$ \{-a^2\}$&$ H^* \setminus \{-1,1\}$&$ H^* \setminus \{ -a^2, a^2 \} $&$ H \setminus \{-a, a \} $\\ \hline
    $-a$ &$\{ -a, -1 \} $&$  \{-a^2, -a \}$&$\{-a^2, -1 \} $ &$ \{-a\} $&$ H^* \setminus \{-a, a \} $&$ H \setminus \{-1,1\}$ & $H^* \setminus \{ -a^2, a^2 \} $\\ \hline
    $-1$ &$\{-a^2, -a \} $&$\{-a^2, -1 \} $&$\{ -a, -1 \} $&$ \{-1\} $& $ H\setminus \{ -a^2, a^2 \}$ &$H^* \setminus \{-a, a \} $ & $ H^* \setminus \{-1,1\}$\\ \hline
    $0$ &$\{-a^2\}$&$\{-a\}$&$\{-1\}$&$\{0\}$&$\{1\}$&$\{a\}$&$\{a^2\}$ \\ \hline
    $1$&$H^* \setminus \{-1,1\}$&$H^* \setminus \{-a, a \} $&$H \setminus \{ -a^2, a^2 \} $ &$\{ 1 \} $&$ \{ 1,a\} $& $\{ 1, a^2\} $ & $\{ a, a^2\} $ \\ \hline
    $a$ &$H^* \setminus \{ -a^2, a^2 \} $&$  H \setminus \{-1,1\} $&$H^* \setminus \{-a, a \}  $&$ \{ a\} $&$\{ 1, a^2\} $&$\{ a, a^2\}  $& $ \{ 1,a\}$ \\ \hline
    $a^2$&$H \setminus \{-a, a \} $&$H^* \setminus \{ -a^2, a^2 \}  $&$H^*\setminus \{-1,1\} $&$\{ a^2\}$&$\{ a, a^2\} $&$  \{ 1,a\} $&$ \{ 1, a^2\}$  \\ \hline
\end{tabu} }
\end{center}
\vspace{0,2cm}
%
%
%
%
%
%
%
which turns the set $H$ to a hyperfield; we denote it by $\HH_1$.

\item The set $H$ from the previous example can be equipped with another hyperaddition:
\begin{center}
\resizebox{\textwidth}{!}{
\begin{tabu}{|c|[1.3pt]c|c|c|c|c|c|c|}
\hline
    + & $-a^2$ & $-a$ & $-1$ & $0$ & $1$ &$a$ &$a^2$ \\ \tabucline[1.3pt]{-}
    $-a^2$ &$\{-a^2, -a \}$&$ \{ -a^2, -1 \} $&$ \{-a, -1 \} $&$ \{-a^2\}$&$ H^* \setminus \{-a^2, a^2\}$&$ H^* \setminus \{ -a, a \} $&$ H \setminus \{-1, 1 \} $\\ \hline
    $-a$ &$\{ -a^2, -1 \} $&$  \{-a,-1 \}$&$\{-a^2, -a \} $ &$ \{-a\} $&$ H^* \setminus \{-1, 1 \} $&$ H \setminus \{-a^2,a^2\}$ & $H^* \setminus \{ -a, a \} $\\ \hline
    $-1$ &$\{ -a,-1 \} $&$\{-a^2, -a \} $&$\{ -a^2, -1 \} $&$ \{-1\} $& $ H\setminus \{ -a, a \}$ &$H^* \setminus \{-1, 1 \} $ & $ H^* \setminus \{-a^2,a^2\}$\\ \hline
    $0$ &$\{-a^2\}$&$\{-a\}$&$\{-1\}$&$\{0\}$&$\{1\}$&$\{a\}$&$\{a^2\}$ \\ \hline
    $1$&$H^* \setminus \{-a^2,a^2\}$&$H^* \setminus \{-1, 1 \} $&$H \setminus \{ -a, a \} $ &$\{ 1 \} $&$ \{ 1,a^2\} $& $\{ a, a^2\} $ & $\{1, a\} $ \\ \hline
    $a$ &$H^* \setminus \{ -a, a \} $&$  H \setminus \{-a^2,a^2\} $&$H^* \setminus \{-1, 1 \}  $&$ \{ a\} $&$\{ a, a^2\} $&$\{1, a\}  $& $ \{ 1,a^2\}$ \\ \hline
    $a^2$&$H \setminus \{-1,1 \} $&$H^* \setminus \{ -a, a \}  $&$H^*\setminus \{-a^2,a^2\} $&$\{ a^2\}$&$\{ 1, a\} $&$  \{ 1,a^2\} $&$ \{ a, a^2\}$  \\ \hline
\end{tabu} }
\end{center}
Then we obtain another hyperfield; we denote it by $\HH_2$. 
\end{enumerate}
\end{ex}

The following lemma is an immediate corollary of a result already noted in \cite{Mittas1973} (p. 369). For the convenience of a reader we present a simple proof.

 \begin{lem} \label{lem_1-1}
    Let $H$ be a hyperfield that is not a field. Then $1-1 \neq \{ 0 \}$.
\end{lem}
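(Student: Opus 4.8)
The plan is to prove the contrapositive: if $1-1=\{0\}$, then $H$ is a field. (Note that $0\in 1+(-1)$ always holds by $(h_3)$, so the hypothesis genuinely says that $1+(-1)$ equals $\{0\}$, the negation of the claim.) The key reduction is that it suffices to show $1+z$ is a singleton for every $z\in H$. Granting this, Proposition~\ref{prop_hyperfield properties}(iii) makes $x+y=x(1+x^{-1}y)$ a singleton for all $x,y\neq 0$, while $x+0=\{x\}=0+x$ by Proposition~\ref{prop_hyperfield properties}(i); so $+$ is an ordinary (single-valued) binary operation, and then axioms $(h_1),(h_2),(h_3),(h_5)$ together with the abelian group structure on $H\setminus\{0\}$ are precisely the field axioms.

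To prove that $1+z$ is a singleton, the case $z=0$ is Proposition~\ref{prop_hyperfield properties}(i), so assume $z\neq 0$ and pick any $c,d\in 1+z$. Multiplying the membership $d\in 1+z$ by $-1$ and using $(h_5)$ gives $-d\in(-1)+(-z)$, whence
\[
c+(-d)\ \subseteq\ (1+z)+\bigl((-1)+(-z)\bigr).
\]
Since iterated hypersums of sets may be reassociated and recommuted freely (by $(h_1)$ and $(h_2)$), the right-hand side equals $\bigl((1+(-1))+z\bigr)+(-z)=(\{0\}+z)+(-z)=\{z\}+(-z)=z+(-z)$, and by Proposition~\ref{prop_hyperfield properties}(iii) together with the hypothesis this is $z(1+(-1))=z\cdot\{0\}=\{0\}$. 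Therefore $c+(-d)=\{0\}$, so $0\in c+(-d)$, and the uniqueness clause in $(h_3)$ forces $-d=-c$, i.e.\ $c=d$. Hence $1+z$ is a singleton, as required.

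The argument is essentially bookkeeping; the points that need a little care are the free reassociation of the set-valued sum $(1+z)+((-1)+(-z))$, the identity $(-1)x=-x$ (a standard consequence of the hyperfield axioms) used when multiplying a membership relation by $-1$, and confirming that the degenerate cases (e.g.\ $z=0$, or $c$ or $d$ equal to $0$) are harmless. I do not expect any genuinely hard step: the whole conclusion is driven by the hypothesis $1+(-1)=\{0\}$ through Proposition~\ref{prop_hyperfield properties}(iii).
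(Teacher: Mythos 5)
Your proof is correct and follows essentially the same route as the paper's: both argue that $1-1=\{0\}$ forces every set $1+z$ to be a singleton, hence $H$ is a field. The only difference is bookkeeping: the paper takes distinct $x,y\in 1+a$ and uses reversibility to get $x\in 1+(y-1)=y+(1-1)=\{y\}$, needing only one application of the ternary associativity axiom, whereas you compute $c-d\subseteq(1-1)+(z-z)=\{0\}$, which relies on the (standard, but not explicitly established in the paper) free reassociation of a four-term hypersum.
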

\begin{proof}
 Take a hyperfield $H$ which is not a field. Then there is $a \in H$ such that $1+a$ contains at least two different elements, say $x$ and $y$. Then $a \in y-1$  and $x\in 1+a \subseteq 1+(y-1)=y+(1-1)$. If $1-1 = \{ 0 \}$, then $x=y$, a contradiction.
\end{proof}

The first hyperfields, considered by Krasner (see \cite{Krasner1983}), were of a  particular form, so-called \emph{quotient hyperfields}. We present this important construction below.

Let $K$ be a field and $T$ be a subgroup of its multiplicative group $K^*$. For the equivalence relation
\[
x \sim y \text{ if and only if } x=yt \text{ for some } t \in T,
\]
denote the equivalence class of the element $x \in K$ by $[x]_T$ and the set of all equivalence classes by $K_T$. Then the operations
\begin{align*}
    [x]_T+[y]_T& := \{ [x+yt]_T \mid t \in T \} \\
    [x]_T \cdot [y]_T &:= [xy]_T
\end{align*}
turn $K_T$ into a hyperfield called the  \emph{Krasner quotient hyperfield}. \\

At the end of his paper, Krasner asked whether every hyperfield is a quotient hyperfield. Massouros later gave a negative answer by presenting the first non-quotient example \cite{ChMassouros1985}.

\begin{df} \label{def:homo}
Let $H_1$ and $H_2$ be hyperfields.
\begin{enumerate}
    \item A \emph{homomorphism} of hyperfields is a map $\varphi : H_1 \rightarrow H_2$ such that for every $x,y \in H_1$ the following axioms hold:
    \begin{itemize}
        \item[$(f_1)$] $\varphi(0) = 0$,
        \item[$(f_2)$] $\varphi(x \cdot y) = \varphi(x) \cdot  \varphi(y)$,
        \item[$(f_3)$]  $\varphi(x +  y) \subseteq \varphi(x) + \varphi(y)$.
    \end{itemize}
    \item If $\varphi$ satisfies the condition
    \begin{itemize}
        \item[$(f_3')$]  $\varphi(x + y) =\varphi(x) + \varphi(y)$,
    \end{itemize}
    then $\varphi$ is called a \emph{strict homomorphism} of hyperfields.
    \item A strict, bijective homomorphism of hyperfields is called an \emph{isomorphism} of hyperfields.
\end{enumerate}
\end{df}

Below we  present two examples of homomorphisms of hyperfields. 

\begin{ex} \label{ex_homo and iso}
\mbox{}
\begin{enumerate}
    \item Let $K$ be an ordered field and $\SSS$ be the hyperfield of signs described in Example \ref{ex_hyperfields}(3). The map
\begin{align*}
    x & \longmapsto \mathrm{sgn}(x),
\end{align*}
where $\mathrm{sgn}(x)$ denotes the value of the sign function of $x$, is a homomorphism of hyperfields. 
  \item Consider the hyperfields $\HH_1$ and $\HH_2$ from Example \ref{ex_hyperfields}. The map $$a\longmapsto a^2$$  induces an isomorphism of hyperfields. 
\end{enumerate}
\end{ex}

The proof of the following proposition follows in a straightforward way from the definition of a strict homomorphism. In the case of an isomorphism, this was proven in \cite[Prop.~3.23]{Ameri2020}.

\begin{prop}\label{prop:strict_hommorphism_condition}
Let $H_1$ and $H_2$ be hyperfields, and let $\varphi: H_1 \lra H_2$ be a map such that $\varphi(0) = 0$ and $\varphi$ induces a homomorphism of the multiplicative groups $H_1^*$ and $H_2^*$. Then $\varphi$ is a strict homomorphism if and only if
\begin{equation}
    \label{eq:warunke_homomorfizm}
    \varphi(1 + x) = 1 + \varphi(x) \quad \text{for all } x \in H_1.
\end{equation}
\end{prop}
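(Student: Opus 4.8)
The plan is to prove both implications of the biconditional, with the forward direction being essentially immediate and the reverse direction requiring the use of Proposition~\ref{prop_hyperfield properties}(iii) to reduce an arbitrary hypersum $\varphi(x+y)$ to the normalized form $\varphi(1+t)$.

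First I would handle the easy direction. Suppose $\varphi$ is a strict homomorphism, so $\varphi(a+b) = \varphi(a)+\varphi(b)$ for all $a,b \in H_1$. Specializing $a = 1$ and $b = x$ gives $\varphi(1+x) = \varphi(1)+\varphi(x)$; since $\varphi$ induces a homomorphism of the multiplicative groups we have $\varphi(1) = 1$, so $\varphi(1+x) = 1+\varphi(x)$, which is exactly \eqref{eq:warunke_homomorfizm}.

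For the converse, assume \eqref{eq:warunke_homomorfizm} holds; I must show $\varphi(x+y) = \varphi(x)+\varphi(y)$ for all $x,y \in H_1$. The case where $x = 0$ or $y = 0$ follows from Proposition~\ref{prop_hyperfield properties}(i), since then both sides collapse to a singleton and $\varphi$ respects multiplication and sends $0$ to $0$; more precisely $\varphi(x+0) = \varphi(\{x\}) = \{\varphi(x)\} = \varphi(x)+0 = \varphi(x)+\varphi(0)$. So assume $x, y \neq 0$. By Proposition~\ref{prop_hyperfield properties}(iii), $x+y = x(1 + x^{-1}y)$. Applying $\varphi$ and using that it is multiplicative (in particular $\varphi$ commutes with scaling a subset by a group element, i.e. $\varphi(xA) = \varphi(x)\varphi(A)$ for $A \subseteq H_1$, which follows from $(f_2)$ applied elementwise) together with the hypothesis \eqref{eq:warunke_homomorfizm} applied to the element $x^{-1}y$, I get
\[
\varphi(x+y) = \varphi\bigl(x(1+x^{-1}y)\bigr) = \varphi(x)\,\varphi(1+x^{-1}y) = \varphi(x)\bigl(1 + \varphi(x^{-1}y)\bigr) = \varphi(x)\bigl(1 + \varphi(x)^{-1}\varphi(y)\bigr).
\]
Finally, applying Proposition~\ref{prop_hyperfield properties}(iii) again in $H_2$ (with $\varphi(x), \varphi(y) \neq 0$, which holds because $\varphi$ is a group homomorphism on the multiplicative groups) rewrites the right-hand side as $\varphi(x)+\varphi(y)$, completing the proof.

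I do not expect any serious obstacle here; the only point requiring a little care is the bookkeeping that $\varphi$ distributes over the set-scaling $x \cdot (1+x^{-1}y)$ — namely that $\varphi(x \cdot S) = \varphi(x) \cdot \varphi(S)$ for a subset $S$ — which is an elementwise consequence of $(f_2)$, and the observation that $\varphi$ restricted to $H_1^*$ lands in $H_2^*$ so that the inverses and the identity $\varphi(1)=1$ are available. Everything else is a direct substitution.
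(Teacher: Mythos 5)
Your proof is correct, and it is precisely the ``straightforward'' argument the paper alludes to when it omits the proof (citing only \cite[Prop.~3.23]{Ameri2020} for the isomorphism case): reduce $\varphi(x+y)$ to the normalized form via Proposition~\ref{prop_hyperfield properties}(iii), apply \eqref{eq:warunke_homomorfizm} to $x^{-1}y$, and renormalize in $H_2$. The points you flag — that $\varphi(xS)=\varphi(x)\varphi(S)$ elementwise (including the case $0\in S$, handled by $\varphi(0)=0$) and that $\varphi(x),\varphi(y)\in H_2^*$ so (iii) applies in $H_2$ — are exactly the right details to check, and they all go through.
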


 In \cite{Marshall2006} Marshall introduced the notion of a real hyperfield.
\begin{df}\label{def:real_hyperfield}
The hyperfield $H$ is called a real hyperfield if 
there exists a subset $P \subseteq  H$ such that
    \[
    P+P \subseteq P, \hspace{0,5cm} P\cdot P \subseteq P,  \hspace{0,5cm} P \sqcup -P \sqcup \{ 0 \} =  H.
\]
Then the subset $P \subseteq H$ is called a \emph{positive cone} in $H$.
\end{df}

It was already noted by Marshall (\cite{Marshall2006}) that the set $$\sum \dot H^2 = \{a_1^2+\ldots+a_n^2 \mid n \in \mathbb N, a_i \in H\setminus \{ 0 \}\},$$ is contained in every positive cone of $H$. From this it follows that
$1 \in P$ and $P$ is a subgroup of $H\setminus \{ 0 \}$. The proof of this fact is analogous to the corresponding statement for ordered fields, and we leave it to the reader.

Below we present a few examples of real and non-real hyperfields.
\begin{ex} \label{ex_real hyp}
\mbox{}
\begin{enumerate}
    \item Every real field $K$ with a positive cone $P$ can be turned into a real hyperfield with a positive cone $P$ if we identify the element $x+y$ with the singleton $\{ x+y \}$.
    \item The hyperfield of signs from Example \ref{ex_hyperfields} $(3)$ is a real hyperfield with the positive cone $P:=\{ 1 \}$.
    \item The hyperfields $\HH_1$ and $\HH_2$ from Example \ref{ex_hyperfields} $(4)$ and $(5)$ are real  hyperfields. The positive cone in both cases is the set $P:=\{1, a, a^2\}$.
    \item The Krasner hyperfield $\KK$ from Example \ref{ex_hyperfields} $(2)$ is not a real hyperfield, since $-1 = 1$ in $\KK$. 
\end{enumerate}
\end{ex}

The next proposition shows a structural property of a positive cone in a finite real hyperfield.

\begin{prop}\label{prop:condition_sum_of_all}
    Let $H$ be a finite real hyperfield with positive cone $P$. Then
    \begin{equation}\label{full_condition}
        \bigcup_{x \in P} (1+x) = P.
    \end{equation}
\end{prop}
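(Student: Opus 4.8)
The plan is to establish the two inclusions of \eqref{full_condition} separately. The inclusion $\bigcup_{x\in P}(1+x)\subseteq P$ is immediate, since $1\in P$ and $P+P\subseteq P$ force $1+x\subseteq P$ whenever $x\in P$. So the content lies in the reverse inclusion $P\subseteq\bigcup_{x\in P}(1+x)$, and this is exactly where the finiteness of $H$ must enter (for an ordered field the statement is false, e.g.\ $1+\Q_{>0}=\Q_{>1}\subsetneq\Q_{>0}$).

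First I would reformulate membership in the union. Using $(h_4)$ and $(h_2)$ one checks that for $p,x\in H$ one has $p\in 1+x$ if and only if $x\in p-1$. Hence, for $p\in P$, $p\in\bigcup_{x\in P}(1+x)$ if and only if the hypersum $p-1$ meets $P$, and the goal becomes: $(p-1)\cap P\neq\emptyset$ for every $p\in P$.

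Suppose not, and fix $p\in P$ with $(p-1)\cap P=\emptyset$, i.e.\ $p-1\subseteq\{0\}\cup(-P)$. If $0\in p-1$, then $p=1$ by uniqueness of the inverse, so $1-1\subseteq\{0\}\cup(-P)$; since $1-1$ is stable under $w\mapsto -w$ (because $-(a+b)=(-a)+(-b)$), this forces $1-1=\{0\}$, contradicting Lemma~\ref{lem_1-1} — here one uses that a finite real hyperfield cannot be a field, since an ordered field is infinite. Otherwise $0\notin p-1$, so $p\neq1$ and $p-1\subseteq -P$, and I claim $p^{k}-1\subseteq -P$ for all $k\geq1$, by induction on $k$. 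The inductive step rests on the telescoping inclusion
\[
p^{k+1}-1\ \subseteq\ \bigl(p^{k+1}-p^{k}\bigr)+\bigl(p^{k}-1\bigr),
\]
which holds because $0\in p^{k}-p^{k}$ yields $p^{k+1}\in(p^{k+1}-p^{k})+p^{k}$, and then adding $-1$ and using $(h_1)$ gives the inclusion. Since $p^{k+1}-p^{k}=p^{k}(p-1)\subseteq p^{k}\cdot(-P)=-P$ by $(h_5)$, and $p^{k}-1\subseteq -P$ by the inductive hypothesis, the right-hand side lies in $(-P)+(-P)=-(P+P)\subseteq -P$. This proves the claim; but $P$ is finite, so $p^{m}=1$ for some $m\geq1$, whence $0\in p^{m}-1\subseteq -P$, which is absurd.

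This contradiction shows $(p-1)\cap P\neq\emptyset$ for every $p\in P$, hence $P\subseteq\bigcup_{x\in P}(1+x)$, which together with the trivial reverse inclusion establishes \eqref{full_condition}. The one genuinely delicate point I expect is the hyperfield bookkeeping around the telescoping inclusion — getting the direction right ($\subseteq$, not $=$) and checking that it uses only $0\in p^{k}-p^{k}$, distributivity, associativity, and the closure $P+P\subseteq P$; everything else (the $(h_4)$ reformulation, the split into the cases $0\in p-1$ and $p-1\subseteq -P$, and the appeal to finiteness) is routine.
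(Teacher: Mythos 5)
Your proof is correct and follows essentially the same route as the paper's: assume some $p\in P$ is missed by the union, reformulate via reversibility, run a telescoping induction (using distributivity and $P+P\subseteq P$) to show $p^k-1$ stays on the non-positive side for all $k$, and let the finiteness of the order of $p$ force $0\in 1-1$ into a contradiction via the symmetry of $1-1$ and Lemma~\ref{lem_1-1}. Your explicit case split on whether $0\in p-1$ merely reorganizes the endgame (the paper instead carries $\{0\}$ through the induction and invokes the symmetry argument once at the end); up to a sign convention the two arguments are identical.
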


\begin{proof}
    Let us note that the set $1 - 1$ is symmetric, i.e., 
    \[
        x \in 1 - 1 \quad \iff \quad -x \in 1 - 1.
    \]
    Suppose that there exists $a \in P$ such that $a \notin 1 + x$ for every $x \in P$. This is equivalent to $-P \cap (1 - a) = \emptyset$, therefore $1 - a \subseteq P \sqcup \{0\}$. 
    
    To use induction, we assume that $1 - a^{k-1} \subseteq P \sqcup \{0\}$. Then
    \[
        a^{-1} - a^{k-1} 
        \subseteq a^{-1} - 1 + 1 - a^{k-1} 
        \subseteq a^{-1} \cdot \podwzorem{(1 - a)}{\subseteq P\sqcup \{0\}} + \podwzorem{(1 - a^{k-1})\;}{\subseteq P\sqcup \{0\}},
    \]
    therefore $a^{-1} - a^{k-1} \subseteq P \sqcup \{0\}$.
    
    Thus,
    \[
        1 - a^k = a \cdot \big(a^{-1} - a^{k-1}\big) \subseteq P \sqcup \{0\}
    \]
    for every $k \in \mathbb{N}$.

    Since $P$ is a finite group, every element has finite order. Let $\operatorname{ord}(a) = n$. Then
    \[
        1 - 1 = 1 - a^n \subseteq P \sqcup \{0\}.
    \]
    By the symmetry of the set $1 - 1$, this holds only if $1 - 1 = \{0\}$. From Lemma~\ref{lem_1-1}, we conclude that $H$ is a field, but there is no finite real field.
\end{proof}

\begin{lem} \label{lem_ass cond}

The associativity axiom $(h_1)$ in Definition \ref{def_hyperfield} can be replaced, under the assumption of the remaining axioms, by the identity
\begin{equation} \label{eq_ass}
    (1 + x) + y = (1 + y) + x.
\end{equation}
    for $x, y \in H$. 
\end{lem}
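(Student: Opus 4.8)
The plan is to show that, given axioms $(h_2)$--$(h_5)$ together with the multiplicative group structure on $H \setminus \{0\}$, the full associativity axiom $(h_1)$ is equivalent to the single identity \eqref{eq_ass}. One direction is immediate: $(h_1)$ and $(h_2)$ give $(1+x)+y = 1+(x+y) = 1+(y+x) = (1+y)+x$, which is exactly Proposition~\ref{prop_hyperfield properties}(iv); so \eqref{eq_ass} certainly holds in every hyperfield. The substance is the converse: assuming $(h_2)$, $(h_3)$, $(h_4)$, $(h_5)$ and \eqref{eq_ass}, recover $(x+y)+z = x+(y+z)$ for all $x,y,z \in H$.

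First I would dispose of the degenerate cases. If any of $x,y,z$ is $0$, then using $(h_2)$ and the identity $w+0 = \{w\}$ (which follows from $(h_3)$ and $(h_4)$, or should be available as a basic consequence of the weakened axioms — if not, I would first establish it), both sides collapse to the same two-fold sum, so associativity is trivial there. Hence assume $x,y,z \in H^* = H \setminus \{0\}$. Now the key move is to use the distributivity axiom $(h_5)$ to normalize: since $x \neq 0$ is invertible, $(x+y)+z = x\bigl((1 + x^{-1}y) + x^{-1}z\bigr)$ and $x + (y+z) = x\bigl(1 + (x^{-1}y + x^{-1}z)\bigr)$, using $(h_5)$ applied termwise to a union (which needs the elementary fact that $z(A+B) = zA + zB$ for subsets $A,B$, a direct consequence of $(h_5)$). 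So after multiplying through by $x^{-1}$ and renaming $u = x^{-1}y$, $v = x^{-1}z$, it suffices to prove $(1+u)+v = 1+(u+v)$ for all $u,v \in H^*$. By $(h_2)$ the right-hand side is $1 + (v + u)$, and if I can symmetrically rewrite this I will want to compare $(1+u)+v$ with $(1+v)+u$ — but that is precisely \eqref{eq_ass}. So the real content reduces to: \emph{from \eqref{eq_ass} deduce $(1+u)+v = 1+(u+v)$}, i.e. deduce the special case of associativity where the leftmost summand is $1$.

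This last reduction is the main obstacle, and here is how I would attack it. I expect to prove the set-equality $(1+u)+v = 1+(u+v)$ by showing containment of elements each way, using $(h_4)$ (reversibility: $c \in a+b \Rightarrow a \in -b+c$) to shuttle summands around, together with \eqref{eq_ass}. Concretely, take $w \in (1+u)+v$; then there is $s \in 1+u$ with $w \in s+v$. I want to produce $t \in u+v$ with $w \in 1+t$. Using $(h_4)$ on $s \in 1+u$ gives $u \in -1+s$ (after also using $(h_2)$), and using $(h_4)$ on $w \in s+v$ gives $s \in w - v$. The strategy is to combine these and apply \eqref{eq_ass} in the form $(1 + a) + b = (1 + b) + a$ with suitable choices — for instance rewriting $1 + u \subseteq 1 + (-1 + s) = (1 + (-1)) + s$ and $w \in s + v \subseteq (-1+s)+ \ldots$ — being careful that \eqref{eq_ass} is stated for the specific shape $(1+\cdot)+\cdot$. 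The likely clean route is: $w \in s + v$ with $s \in 1+u$ gives $w \in (1+u)+v = (1+v)+u$ by \eqref{eq_ass}, so there is $s' \in 1+v$ with $w \in s'+u$; now by $(h_4)$ twice, $u \in -s' + w$ and we want $t \in u+v$ with $w \in 1+t$; since $s' \in 1+v$ we have $v \in -1 + s'$, hence $u + v \subseteq u + (-1 + s') = (u + (-1)) + s'$... at which point I would need \emph{another} instance of the $1$-leftmost associativity — so this is genuinely circular unless I bootstrap carefully. The honest plan, then, is to set up the element-chase so that every application of associativity is of the literal form \eqref{eq_ass}, using $(h_5)$ to move a nonzero scalar into the leading slot whenever needed (e.g. $u + v = u(1 + u^{-1}v)$, so $(u+v) + s' = u\bigl((1 + u^{-1}v) + u^{-1}s'\bigr)$ and then \eqref{eq_ass} applies inside the parentheses). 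I expect the final proof to be a short but delicate symmetric element-chase: reduce to the normalized case via $(h_5)$, then use \eqref{eq_ass} plus reversibility $(h_4)$ plus commutativity $(h_2)$ to verify $w \in (1+u)+v \iff w \in 1+(u+v)$ directly, with the only subtlety being bookkeeping of which summand sits in the $1$-slot at each step.
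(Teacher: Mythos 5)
Your forward direction and the reduction to the nonzero case are fine, but the core of the converse is not actually proved: you reduce (by factoring out the \emph{first} summand $x$ via $(h_5)$) to the sub-goal $(1+u)+v = 1+(u+v)$, and this statement is \emph{not} an instance of \eqref{eq_ass} — the identity \eqref{eq_ass} only compares two expressions of the shape $(1+\cdot)+\cdot$, whereas $1+(u+v)$ has the parenthesization you are trying to establish in the first place. Your subsequent element-chase with $(h_4)$ runs into exactly the circularity you yourself flag ("this is genuinely circular unless I bootstrap carefully"), and you never resolve it; as written, the proof is incomplete at its only nontrivial step.

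The missing idea is to normalize by the \emph{middle} summand rather than the first. For $b \neq 0$, distributivity gives
\begin{equation*}
(a+b)+c \;=\; b\Big(\big(ab^{-1}+1\big)+cb^{-1}\Big)
\qquad\text{and}\qquad
a+(b+c) \;=\; (c+b)+a \;=\; b\Big(\big(cb^{-1}+1\big)+ab^{-1}\Big),
\end{equation*}
so that \emph{both} sides of the associativity law become expressions of the exact shape $(1+x)+y$ appearing in \eqref{eq_ass} (after using $(h_2)$ to write $ab^{-1}+1 = 1+ab^{-1}$), and the identity applies in a single step; the only case needing separate treatment is $b=0$, where $(a+0)+c = a+c = a+(0+c)$ once $x+0=\{x\}$ is available from $(h_3)$ and $(h_4)$. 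This is the paper's argument, and it also closes your sub-goal if you insist on it: $(1+u)+v = u\big((1+u^{-1})+vu^{-1}\big)$ and $1+(u+v) = u\big((1+vu^{-1})+u^{-1}\big)$ are equal by \eqref{eq_ass} applied inside the parentheses. Without this change of which factor you pull out, your plan does not go through.
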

\begin{proof}
   The axioms of Definition \ref{def_hyperfield} obviously imply condition \eqref{eq_ass}. Now assume that axioms $(h_2) - (h_5)$ and condition \ref{eq_ass} hold. Take $a,b,c \in H$. If $b=0$, then 
   $(a+b)+c = a+c = a+(b+c)$. Assume now that $b \neq 0$. Then  
   \begin{align*}
      (a+b)+c = b\big((ab^{-1} +1) +cb^{-1}\big)= b\big((cb^{-1} +1) +ab^{-1}\big)=a+(b+c).
    \end{align*}
 
\end{proof}
\begin{cor}\label{3ass}
For a real hyperfield $H$ with positive cone $P$ condition \eqref{eq_ass} splits into the three following conditions for $a,b \in P$:
    \begin{itemize}
        \item[$(a)$] $(1+a)+b=(1+b)+a$,
        \item[$(b)$] $(1-a)+b = (1+b)-a$,
        \item[$(c)$] $(1-a)-b = (1-b)-a$.
    \end{itemize}
\end{cor}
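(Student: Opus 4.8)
The plan is to unwind the single identity \eqref{eq_ass} over a real hyperfield by running over the cases determined by the sign of the free variables $x$ and $y$, using the decomposition $H = P \sqcup -P \sqcup \{0\}$ and the fact (Proposition~\ref{prop_hyperfield properties}(i)) that the case $x=0$ or $y=0$ is trivial. So it suffices to consider $x,y \in P \cup (-P)$, which gives four sign combinations: $(x,y)=(a,b)$, $(a,-b)$, $(-a,b)$, $(-a,-b)$ with $a,b\in P$. The claim is that these four instances of \eqref{eq_ass} collapse to the three listed conditions $(a)$, $(b)$, $(c)$.

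First I would treat $(x,y)=(a,b)$: this is literally \eqref{eq_ass} restricted to $a,b\in P$, which is condition $(a)$. Next, $(x,y)=(-a,-b)$: substituting gives $(1-a)-b=(1-b)-a$, which is condition $(c)$. For the middle pair, $(x,y)=(a,-b)$ gives $(1+a)-b=(1-b)+a$, and $(x,y)=(-a,b)$ gives $(1-a)+b=(1+b)-a$; after relabeling the dummy variables $a\leftrightarrow b$ these two are the same assertion, namely condition $(b)$. Thus the full content of \eqref{eq_ass} for real hyperfields is exactly $(a)\wedge(b)\wedge(c)$, and conversely each of $(a),(b),(c)$ is an instance of \eqref{eq_ass}, so the equivalence is immediate. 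One should also remark that I do not need separate cases for, say, $x\in P$, $y\in -P$ versus $x\in -P$, $y\in P$ as genuinely distinct conditions precisely because \eqref{eq_ass} is symmetric in the roles of the two summands being added to $1$ — this is the observation that reduces four cases to three.

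The one point that requires a word of care, rather than being purely mechanical, is making sure the reduction to the case $a,b \in P$ (as opposed to $a,b$ ranging over all of $H$) is legitimate: one must check that when $x = 0$ or $y = 0$ both sides of \eqref{eq_ass} reduce to $1 + y$ or $1 + x$ respectively via Proposition~\ref{prop_hyperfield properties}(i), so those instances carry no information and can be discarded. After that the argument is just bookkeeping on signs. I would also note explicitly that "$-a$ for $a\in P$" indeed ranges over all of $-P$ and that $0\notin P$, so the three families $\{a\}_{a\in P}$, $\{-a\}_{a\in P}$, $\{0\}$ genuinely exhaust $H$ without overlap, which is what licenses the case split; this is where the defining property $P\sqcup -P\sqcup\{0\}=H$ of a positive cone (Definition~\ref{def:real_hyperfield}) enters. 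The main (and only) obstacle is thus purely organizational — correctly matching the symmetric instance $(a,-b)$ with $(-a,b)$ so as not to list a spurious fourth condition — and there is no real analytic or combinatorial difficulty to overcome.
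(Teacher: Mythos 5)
Your proof is correct and is exactly the argument the paper leaves implicit: the corollary is stated without proof as an immediate consequence of Lemma~\ref{lem_ass cond}, and the intended reasoning is precisely your sign case split on $H = P \sqcup -P \sqcup \{0\}$, with the $(a,-b)$ and $(-a,b)$ cases identified via the symmetry of \eqref{eq_ass}. Nothing to add.
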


\begin{prop}
    Let $H$ be a finite real hyperfield with positive cone $P$. Then 
    \[
    (1-a) \cap P \neq \emptyset \quad \text{ and } \quad (1-a) \cap -P \neq \emptyset \quad \textnormal{for all } \quad a\in P.
    \]
\end{prop}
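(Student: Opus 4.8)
The plan is to prove the two assertions in turn, obtaining the $-P$ statement almost for free from Proposition~\ref{prop:condition_sum_of_all} and then deducing the $P$ statement from it by a multiplicative change of variable. Throughout, $a$ is a fixed element of $P$.

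For the assertion $(1-a)\cap(-P)\neq\emptyset$: Proposition~\ref{prop:condition_sum_of_all} gives $\bigcup_{x\in P}(1+x)=P$, so there is some $p\in P$ with $a\in 1+p$. By Proposition~\ref{prop_hyperfield properties}(v), $a\in 1+p$ is equivalent to $-p\in 1+(-a)=1-a$; since $p\in P$ we have $-p\in -P$, hence $-p\in(1-a)\cap(-P)$. This step is essentially a reformulation of the already-established proposition.

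For the assertion $(1-a)\cap P\neq\emptyset$: since $P$ is a multiplicative group, $a^{-1}\in P$, so the part just proved (applied to $a^{-1}$ in place of $a$) yields some $q\in(1-a^{-1})\cap(-P)$. Using the distributivity axiom $(h_5)$ together with the elementary identities $(-1)x=-x$ and $(-a)(-a^{-1})=1$, valid in any hyperfield, one has
\[
(-a)(1-a^{-1})=(-a)\cdot 1+(-a)(-a^{-1})=-a+1=1-a,
\]
so $(-a)q\in 1-a$; and since $-a\in -P$ and $q\in -P$, while $(-P)(-P)=P\cdot P\subseteq P$, the element $(-a)q$ lies in $P$. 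Thus $(-a)q\in(1-a)\cap P$, completing the proof.

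I do not anticipate a genuine obstacle; the only point requiring care is the sign bookkeeping, namely verifying that $(-a)(1-a^{-1})$ collapses to $1-a$ and that a product of two elements of $-P$ lands in $P$ — both immediate from the axioms and the group structure of $H^*$. As an alternative, either inclusion can be proved self-containedly by imitating the contradiction argument in the proof of Proposition~\ref{prop:condition_sum_of_all}: assuming, say, $1-a\subseteq P\sqcup\{0\}$, one propagates this to $1-a^k\subseteq P\sqcup\{0\}$ for all $k$, takes $k=\ord(a)$ to get $1-1\subseteq P\sqcup\{0\}$, and contradicts Lemma~\ref{lem_1-1} together with the nonexistence of a finite real field; but the reduction above is shorter and reuses Proposition~\ref{prop:condition_sum_of_all} directly.
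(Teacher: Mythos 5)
Your proposal is correct and follows essentially the same route as the paper: both parts rest on Proposition~\ref{prop:condition_sum_of_all}, the $-P$ part by reversion of $a\in 1+p$, and the $P$ part by applying that proposition to $a^{-1}$ and multiplying through by an element of $-P$ (the paper writes this as $y\in a^{-1}-1$ giving $ay\in 1-a$, which is the same computation as your $(-a)q\in 1-a$ with $q=-y$). The sign identities you flag, $(-a)(-a^{-1})=1$ and $(-P)(-P)\subseteq P$, are indeed the only points needing care and hold as you state.
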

\begin{proof}
Take $a \in P$. Using Proposition \ref{prop:condition_sum_of_all} we obtain $x \in P$ such that $a \in 1+x$. Then $-x \in 1-a$. Hence, $(1-a) \cap -P \neq \emptyset$. There is also $y \in P$ such that $a^{-1} \in 1+y$. Hence, $y \in a^{-1} -1$, so $ay \in 1-a$. We conclude that $(1-a) \cap P \neq \emptyset$.
\end{proof}

\begin{prop} \label{prop_2cond_for_asso}
    Let $H$ be a real hyperfield with positive cone $P$. Then the associativity axiom,  under the assumption of the remaining axioms, can be reduced to the two following conditions for $a,b \in P$:
    \begin{itemize}
        \item[$(a)$] $(1+a)+b=(1+b)+a$,
        \item[$(b_+)$] $\big[(1-a)+b \big]\cap P = \big[(1+b)-a\big]\cap P$.
    \end{itemize}
\end{prop}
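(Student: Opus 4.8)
The plan is to reduce to Corollary~\ref{3ass} and Lemma~\ref{lem_ass cond}: chaining them shows that, granted axioms $(h_2)$--$(h_5)$, the positive-cone axioms, and the real structure $H=P\sqcup\{0\}\sqcup(-P)$, associativity $(h_1)$ is equivalent to the conjunction of conditions $(a)$, $(b)$, $(c)$ for $a,b\in P$. Since $(a)$ is among our hypotheses, it suffices to derive $(b)$ and $(c)$ from $(a)$ together with $(b_+)$. I would prove $(b)$, i.e.\ the set equality $(1-a)+b=(1+b)-a$, by testing membership of an arbitrary $z\in H$ along the decomposition $H=P\sqcup\{0\}\sqcup(-P)$, and then obtain $(c)$ from $(b)$ alone by a rescaling argument, so that $(a)$ is only ever used inside the proof of $(b)$.

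For $(b)$: reversibility (Proposition~\ref{prop_hyperfield properties}(vi)) gives, for every $z\in H$,
\[
z\in(1-a)+b \iff (1-a)\cap(z-b)\neq\emptyset, \qquad z\in(1+b)-a \iff (1+b)\cap(z+a)\neq\emptyset,
\]
so it is enough to show $(1-a)\cap(z-b)\neq\emptyset \iff (1+b)\cap(z+a)\neq\emptyset$ for all $z$. When $z\in P$ this is, after the same rewriting, just a reformulation of $(b_+)$; when $z=0$ both sides are equivalent to $1\in a-b$, using only uniqueness of additive inverses and~(vi). The case $z=-t$ with $t\in P$ is the heart of the matter, and I expect it to be the main obstacle, since here $(b_+)$ no longer suffices and $(a)$ must be brought in. In that case the positive-cone axioms give $b+t\subseteq P$ and $1+b\subseteq P$; multiplying by $-1$ and using $(h_5)$ turns the left-hand condition into $(a-1)\cap(b+t)\neq\emptyset$, and one more application of~(vi) rewrites the two conditions as
\[
(a-1)\cap(b+t)\neq\emptyset \iff a\in 1+(b+t), \qquad (1+b)\cap(a-t)\neq\emptyset \iff a\in (1+b)+t .
\]
Now condition $(a)$ --- fed into the computation from the proof of Lemma~\ref{lem_ass cond} applied to the triple $(1,b,t)$, whose middle step is precisely $(a)$ for the pair $(b^{-1},tb^{-1})\in P\times P$, after scaling by $b$ and using~(vii) --- yields $1+(b+t)=(1+b)+t$. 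Hence the two memberships agree and $(b)$ follows. The conceptual point is that $(b_+)$ controls only the ``positive part'' of the equality in $(b)$; its negative part forces a comparison of $1+(b+t)$ with $(1+b)+t$, which is exactly what $(a)$ supplies after rescaling.

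For $(c)$: using~(vii) and $(h_2)$ I would rewrite $(1-a)-b=-a\big((1-a^{-1})+a^{-1}b\big)$ and, symmetrically, $(1-b)-a=-b\big((1-b^{-1})+b^{-1}a\big)$. Since $a^{-1},a^{-1}b\in P$, condition $(b)$ (just established) gives $(1-a^{-1})+a^{-1}b=(1+a^{-1}b)-a^{-1}$, and likewise $(1-b^{-1})+b^{-1}a=(1+b^{-1}a)-b^{-1}$. Writing $c:=a^{-1}b\in P$ (so $b=ac$ and $b^{-1}a=c^{-1}$) and applying~(vii) and $(h_2)$ once more, both right-hand sides collapse to $-a\big((1+c)-a^{-1}\big)$, whence $(1-a)-b=(1-b)-a$. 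With $(a)$, $(b)$, $(c)$ all in hand, Corollary~\ref{3ass} and Lemma~\ref{lem_ass cond} deliver $(h_1)$, completing the argument.
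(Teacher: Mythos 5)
Your proposal is correct, and its overall architecture coincides with the paper's: both reduce the problem to deriving conditions $(b)$ and $(c)$ of Corollary~\ref{3ass} from $(a)$ and $(b_+)$, and both handle $(b)$ by splitting membership along $H=P\sqcup\{0\}\sqcup(-P)$, with $(b_+)$ covering the positive part, a direct reversibility argument covering $0$, and condition $(a)$ --- via the identity $1+(b+t)=(1+b)+t$ obtained by the rescaling computation from Lemma~\ref{lem_ass cond} --- covering the negative part (the paper phrases this as two inclusions with element chasing, you phrase it as one biconditional chain, but the content is identical). Where you genuinely diverge is in the derivation of $(c)$: the paper runs a second element-chasing argument, applying $(b)$ twice in each of the positive and negative cases and then interchanging $a$ and $b$ to close both inclusions, whereas you obtain the full set equality in one stroke from the factorizations $(1-a)-b=-a\bigl((1-a^{-1})+a^{-1}b\bigr)$ and $(1-b)-a=-b\bigl((1-b^{-1})+b^{-1}a\bigr)$, apply $(b)$ inside each bracket (legitimate, since $a^{-1},a^{-1}b,b^{-1},b^{-1}a\in P$), and observe that both resulting expressions reduce by $(h_5)$ and $(h_2)$ to $(-a-b)+1$. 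This is shorter, avoids any case split on signs, and makes transparent that $(c)$ is a formal consequence of $(b)$ alone; one only needs to note, as you implicitly do, that properties (iii), (vi), (vii) of Proposition~\ref{prop_hyperfield properties} are available without assuming associativity, since they follow from $(h_2)$--$(h_5)$ --- the paper relies on the same fact throughout its own proof.
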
 

\begin{proof}
First, we show that conditions $(a)$ and $(b_+)$ imply condition $(b)$ of Corollary~\ref{3ass}.

Observe that 
\[
0 \in (1-a)+b \quad \text{if and only if} \quad -b \in 1-a.
\]
The latter is equivalent to $a \in 1+b$, which holds if and only if $0 \in (1+b)-a$.

Now take $y \in P$ such that $-y \in (1+b)-a$. Using condition $(a)$, we obtain
\[
a \in (1+b)+y 
= b\Big( (1+b^{-1}) + yb^{-1} \Big) 
= b\Big( (1+yb^{-1}) + b^{-1} \Big) 
= (b+y)+1.
\]
Hence, there exists $z \in b+y$ such that $a \in 1+z$. Therefore, $y \in z-b$ and $z \in a-1$, so
\[
y \in z-b \subseteq (a-1)-b.
\]
This means that $-y \in (1-a)+b$. This, together with $(b_+)$, proves that
\[
(1-a)+b \supseteq (1+b)-a.
\]

Now take $-y \in (1-a)+b$ for $y \in P$. Hence, $y \in (a-1)-b$. Let $t \in a-1$ be such that $y \in t-b$. Then, using condition $(a)$, we have
\[
a \in t + 1 \subseteq (b+y)+1 
= b \Big( (1+yb^{-1}) + b^{-1} \Big) 
= b \Big( (1+b^{-1}) + yb^{-1} \Big)
= (1+b) + y.
\]
We obtain that $-y \in (1+b)-a$, and thus
\[
(1-a)+b \subseteq (1+b)-a.
\]
This completes the proof that condition $(b)$ is a consequence of conditions $(a)$ and $(b_+)$.\\

Next, we show that condition $(c)$ of Corollary~\ref{3ass} follows from $(a)$ and $(b)$. First observe that
\[
0 \in (1-a)-b \quad \text{if and only if} \quad b \in 1-a.
\]
This is equivalent to $a \in 1-b$, which holds if and only if $0 \in (1-b)-a$.

Take $y \in P$ such that $-y \in (1-a)-b$. Then there is $z \in 1-a$ such that $-y \in z-b$ and, using condition $(b)$, we have
\[
b \in z + y \subseteq (1-a)+y = (1+y)-a.
\]
This means that there is $x \in 1+y$ such that $b \in x-a$, and again using condition $(b)$, we obtain
\[
-y \in 1 - x \subseteq 1 - (a + b) 
= -b \Big( (1 + ab^{-1}) - b^{-1} \Big) 
= -b \Big( (1 - b^{-1}) + ab^{-1} \Big) 
= (1 - b) - a.
\]
This proves that 
\[
\big((1-a)-b\big) \cap -P \subseteq \big((1-b)-a\big) \cap -P.
\]

Assume now that $y \in (1-a)-b \cap P$. Then there is $z \in 1-a$ such that $y \in z-b$. Using condition $(b)$, we obtain
\[
b \in z - y \subseteq (1-a) - y 
= -a \Big( (1 - a^{-1}) + ya^{-1} \Big) 
= -a \Big( (1 + ya^{-1}) - a^{-1} \Big) 
= (-a - y) + 1 = 1 - (a + y).
\]
Hence, there is $x \in a + y$ such that $b \in 1 - x$. So,
\[
y \in x - a \subseteq (1 - b) - a.
\]
This proves that
\[
(1-a)-b \cap P \subseteq (1 - b) - a \cap P.
\]
If we interchange $a$ and $b$, we obtain the reverse inclusions in both cases. Thus, we have shown that the conditions $(a)$ and $(b_+)$ imply the conditions of Corollary~\ref{3ass}. The opposite implication is obvious.

\end{proof}

In the theory of real fields, isomorphisms are expected to preserve positive cones. We place a similar expectation on isomorphisms of real hyperfields.

\begin{df} \label{def_real isom}
    Let $(H_1, P_1)$ and $(H_2, P_2)$ be real hyperfields. An \emph{isomorphism of real hyperfields} is an isomorphism of hyperfields $\varphi : H_1 \lra H_2$ such that
$\varphi(a) \in P_2 \text{ for all } a \in P_1.$

    \end{df}

       
    \begin{ex}
       The isomorphism described in  Example \ref{ex_homo and iso} (2)is an isomorphism of real hyperfields.
    \end{ex}

For hyperfields we distinguish several notions of characteristic. Let us recall two of them (see \cite{Viro2010}).
\begin{df} \label{def_char}
    Let $H$ be a hyperfield.\\[0.5em]
    (a) The smallest positive integer $n$ such that
    \[
    0 \in \underbrace{1 + \ldots + 1}_{n \text{ times}} 
    \]
    is called the \emph{characteristic} of $H$, and we denote it by $\Char\, H = n$. If no such number exists, we put $\Char\, H = 0$.\\[0.5em]
    (b) The smallest positive integer $n$ such that
    \[
    1 \in \underbrace{1 + \ldots + 1}_{n+1 \text{ times}} 
    \]
    is called the \emph{C-characteristic} of $H$, and we denote it by $\CChar\, H = n$. If no such number exists, we put $\CChar\, H = 0$.
\end{df}

\begin{rem}
    The characteristic and the C-characteristic are invariants of an isomorphism of hyperfields. 
\end{rem}

Note that if $H$ is a field, then $\Char\, H = \CChar\, H$. If $H$ is a real hyperfield, then $\Char\, H = 0$, and the argument is analogous to the one used in the case of real fields. However, the C-characteristic may vary. In fact, every natural number can be realized as the C-characteristic of some infinite real hyperfield (see Theorem~4 in \cite{KeLS2023}). Hence, the notion of C-characteristic appears to be a useful tool for classifying real hyperfields. Moreover, the concepts of characteristic and C-characteristic can be applied in the following criterion to determine whether a given hyperfield is not a Krasner quotient hyperfield.

\begin{thm}[Theorem 6, \cite{KeLS2023}] \label{thm_cchar>1}
    Every finite hyperfield $H$ with $\Char\, H = 0$ and $\CChar\, H > 1$ is not a Krasner quotient hyperfield.
\end{thm}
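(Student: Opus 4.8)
The plan is to argue by contradiction: suppose $H = K_T$ is a Krasner quotient hyperfield, where $K$ is a field and $T \leq K^*$ a subgroup, and suppose $\Char H = 0$ and $\CChar H = n > 1$. The strategy is to show that the quotient construction forces either $\CChar H \le 1$ or $\Char H > 0$, by analysing the $n$-fold hypersum of $1$ in $K_T$.

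First I would unwind what the hypotheses say in terms of $K$ and $T$. Writing $1$ for $[1]_T$ in $K_T$, the $m$-fold hypersum $\underbrace{1 + \cdots + 1}_{m}$ in $K_T$ consists of all classes $[s]_T$ where $s$ ranges over sums $t_1 + \cdots + t_m$ with each $t_i \in T$. Thus $\Char H = 0$ means no such sum of elements of $T$ ever equals $0$ in $K$, while $\CChar H = n$ means $n$ is minimal with the property that some sum $t_1 + \cdots + t_{n+1}$ of $n+1$ elements of $T$ lies in $T$ (equivalently, $[1]_T \in (n+1)\cdot[1]_T$), and no shorter sum of $2 \le m \le n$ elements of $T$ lies in $T \cup \{0\}$ — wait, more carefully: $\CChar H \ge 2$ means $1 \notin 1+1$ in $K_T$, i.e. no $t_1 + t_2$ with $t_i \in T$ lies in $T$.

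Next, here is the key step, and the one I expect to be the main obstacle. From $\CChar H = n > 1$ we get $t_1 + \cdots + t_{n+1} = t_0 \in T$ for some $t_i \in T$. Dividing through by $t_0 \in T$ (legitimate since $T$ is a group and this is a field computation), we may assume $t_0 = 1$, so $1 = t_1 + \cdots + t_{n+1}$ with $t_i \in T$. Now the idea is to exploit that $-1 \in T$ or $-1 \notin T$. If $-1 \in T$, then $0 = 1 + (-1) = 1 + (-1)\cdot 1$ exhibits $0 \in 1 + 1$ in $K_T$ (since $-1 = (-1)\cdot 1 \in [1]_T$... no: $-1 \in T$ means $[-1]_T = [1]_T$), giving $\Char H \le 2$, contradiction. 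So $-1 \notin T$; but then I would rearrange $1 = t_1 + \cdots + t_{n+1}$ to get $0 = (-1) + t_1 + \cdots + t_{n+1}$, an $(n+2)$-term relation — not immediately useful. The cleaner route: from $1 = t_1+\cdots+t_{n+1}$, move all but one term: $1 - t_{n+1} = t_1 + \cdots + t_n$, so the right side is a sum of $n$ elements of $T$. If $n = 2$ this reads $1 - t_3 = t_1 + t_2 \in T\cup\{0\}$... I would need to show membership in $T$ contradicts $\CChar H = 2$ and membership in $\{0\}$ contradicts $\Char H = 0$ together with further manipulation. The real obstacle is to leverage minimality of $n$ against the existence of the length-$(n+1)$ relation to manufacture a length-$\le n$ relation landing in $T$, or a zero relation; I would look at sums like $t_1 + (t_2 + \cdots + t_{n+1})$ and use that $t_2 + \cdots + t_{n+1}$, being $1 - t_1$, is an $n$-fold sum, to bootstrap: if $1 - t_1 \in T$ we contradict $\CChar H = n$ (it gives $1 \in (n+1)$-fold... no, gives $1 \in$ a shorter sum landing in $T$), if $1 - t_1 = 0$ then $t_1 = 1$ and we recurse on $1 = t_2 + \cdots + t_{n+1}$, an $n$-term relation, eventually reaching $1 = t$ or $1 = t' + t''$ which either is trivial or contradicts $\CChar H \ge 2$.

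Finally, I would assemble these cases into a clean contradiction, being careful about the boundary behaviour at small $n$ and about the distinction between landing in $T$ versus landing in $0$; the finiteness of $H$ is used to guarantee $T$ is finite, hence every element of $T$ has finite multiplicative order, which together with $\Char H = 0$ pins down the structure enough to close the argument. I expect the bookkeeping in the induction/recursion on the length of the relation to be the delicate part, but the conceptual content is simply that the quotient construction cannot simultaneously avoid a zero-sum and produce a nontrivial short "$1 \in$ sum" relation.
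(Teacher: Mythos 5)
This theorem is not proved in the paper at all: it is imported verbatim as Theorem~6 of \cite{KeLS2023}, so there is no in-paper argument to compare against. Judged on its own terms, your proposal correctly translates the hypotheses into the language of the field $K$ and the subgroup $T$ (the $m$-fold hypersum of $[1]_T$ is $\{[t_1+\cdots+t_m]_T : t_i\in T\}$, $\Char H=0$ means no such sum vanishes, $\CChar H>1$ means no $t_1+t_2$ with $t_i\in T$ lies in $T$), and the observation that $-1\notin T$ is fine. But the key step is never carried out, and the dichotomy you lean on is the point where it fails: from the minimal relation $1=t_1+\cdots+t_{n+1}$ you repeatedly ask whether $1-t_1$ lies in $T$ or equals $0$, yet generically it does neither --- it is just some nonzero element of $K^*$ sitting in one of the other cosets of $T$, and then neither branch of your recursion applies. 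No amount of rearranging the single length-$(n+1)$ relation will manufacture a shorter one. There is also a concrete error in your final paragraph: finiteness of $H=K_T$ means $T$ has finite \emph{index} in $K^*$, not that $T$ is finite; since $\Char H=0$ forces $\operatorname{char}K=0$ (otherwise $p\cdot 1=0$ gives $\Char H\le p$), the field $K$ is infinite and so is $T$, and its elements do not have finite multiplicative order. So the mechanism by which finiteness is supposed to close your argument does not exist.

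The missing idea is Ramsey-theoretic rather than algebraic. Since $\operatorname{char}K=0$, the positive integers embed in $K^*$, and since $H$ is finite they are coloured by the finitely many cosets of $T$. Schur's theorem then produces a monochromatic solution of $x+y=z$: positive integers $x,y,z$ in a common coset $cT$, so that writing $x=ct_1$, $y=ct_2$, $z=ct_3$ and cancelling $c$ gives $t_1+t_2=t_3\in T$ with $t_1,t_2\in T$. Hence $1\in 1+1$ in $K_T$ and $\CChar K_T\le 1$, contradicting $\CChar H>1$. This is where the finiteness of $H$ (finitely many colours) and $\Char H=0$ (so that $\mathbb{N}$ really injects into $K^*$) actually enter; your proposal uses neither hypothesis in a way that could succeed.
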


In particular, it follows that every finite real hyperfield $H$ with $\CChar\, H > 1$ is not a Krasner quotient hyperfield.

\section{Generation of Hyperfields and Real Hyperfields}

Consider an abelian group $(G, \cdot, 1)$ and let $H := G \cup \{ 0 \}$, where $0 \notin G$ and $0 \cdot x = x \cdot 0 = 0$ for every $x \in G$. We recall the construction of  hyperfields from  \cite{Ameri2020}.

Consider a family $\mathcal{H}=\{ A_x \}_{x \in H}$ of nonempty subsets of $ H$. Each subset $A_x$ represents the result of the hyperaddition $1+x$.
The family $\mathcal{H}$ induces the hyperaddition $+$ on $ H$ in the following way
\begin{equation}\label{def_hypersum}
   x+y :=x\cdot A_{x^{-1}y} = \{ xt \mid t \in A_{x^{-1}y} \}\quad \textnormal{ for }\quad  x \neq 0\quad \textnormal{ and }\quad  0+y:=\{ y \}. 
\end{equation}
Assume that the family $\mathcal{H}$ satisfies the following conditions:
\begin{itemize}
    \item[$(k_1)$] $A_x+y= A_y+x$, 
    \item[$(k_2)$] $x\cdot A_{x^{-1}y} = A_{xy^{-1}}\cdot y$, for $x,y \neq 0$,
    \item[$(k_3)$]  there exists exactly one element $t \in H$ such that $0\in A_{t}$ (we define $t:= -1$)
    \item[$(k_4)$] for every $x,y \in  H$ the following implication holds 
    \[
    y \in A_x\quad  \Longrightarrow \quad  -x \in A_{-y},
    \]
    where $-x:=-1\cdot x$.
\end{itemize}

\begin{rem}
In the paper \cite{Ameri2020}, the authors considered slightly different conditions. However, the conditions $(k_1)$--$(k_4)$ are equivalent to those used in \cite{Ameri2020}.
\end{rem}

In \cite{Ameri2020}, it was shown that the structure $H$, with the multiplication and hyperaddition defined above, forms a hyperfield.  
For the reader's convenience and to fill in some gaps, we present a proof of this fact.

\begin{thm}\label{thm:construckja_1}
Let $\mathcal{H} = \{ A_x \}_{x \in H}$ be a family of non-empty subsets of $H$ satisfying conditions $(k_1)$--$(k_4)$. Then the structure $(H, +, \cdot, 0, 1)$ is a hyperfield. Moreover, every hyperfield can be constructed in this way.
\end{thm}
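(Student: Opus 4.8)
The plan is to prove the two assertions separately, the harder being the first. For the forward direction I would fix a family $\mathcal H=\{A_x\}_{x\in H}$ satisfying $(k_1)$--$(k_4)$, equip $H$ with the induced hyperaddition \eqref{def_hypersum}, and verify the five axioms of Definition~\ref{def_hyperfield} (that $(H\setminus\{0\},\cdot,1)$ is an abelian group and that each $x+y$ is a nonempty subset of $H$ is immediate from the setup). A useful first observation is $A_0=\{1\}$: instantiating $(k_4)$ at $x=0$ shows every $y\in A_0$ satisfies $0\in A_{-y}$, whence $-y=-1$ by $(k_3)$, i.e.\ $y=1$; since $A_0\neq\emptyset$ this gives $A_0=\{1\}$, and therefore $a+0=a\cdot A_0=\{a\}$ for every $a$. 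This single fact disposes of all the degenerate cases (a zero argument) in what follows.

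Among the axioms, $(h_5)$ is immediate from the definition $x+y=x\cdot A_{x^{-1}y}$ together with $0\cdot s=0$. Commutativity $(h_2)$ follows from the identity $A_z=z\cdot A_{z^{-1}}$ --- the instance $x=1,\ y=z^{-1}$ of $(k_2)$ --- since with $z=x^{-1}y$ it gives $y\cdot A_{y^{-1}x}=xz\cdot A_{z^{-1}}=x\cdot A_z=x+y$. Once $(h_2)$ is available one records $(-1)^2=1$: from $0\in 1+(-1)=(-1)+1=(-1)\cdot A_{(-1)^{-1}}$ and $(k_3)$ one gets $(-1)^{-1}=-1$. For $(h_3)$: $0\in x\cdot A_{x^{-1}y}$ iff $0\in A_{x^{-1}y}$ iff $x^{-1}y=-1$ (by $(k_3)$), so $(-1)x$ is the unique inverse of $x\neq0$, in agreement with the notation of $(k_4)$, and $-0=0$.

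The one genuinely delicate axiom is reversibility $(h_4)$. Using $(h_2)$ it suffices to prove $z\in x+y\Rightarrow x\in z+(-y)$; for $x,y,z$ all nonzero, writing $u:=x^{-1}y$ and $v:=x^{-1}z$, the hypothesis reads $v\in A_u$ and, unwinding the definitions (using $(-1)^2=1$ to simplify inverses), the conclusion becomes $v^{-1}\in A_{-uv^{-1}}$. I would obtain it in three steps: $(k_4)$ gives $-u\in A_{-v}$; rewriting $A_{-v}=(-v)\cdot A_{-v^{-1}}$ by the identity $A_w=w\cdot A_{w^{-1}}$ turns this into $uv^{-1}\in A_{-v^{-1}}$; and a second application of $(k_4)$ yields $v^{-1}\in A_{-uv^{-1}}$, as required. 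The cases with a zero among $x,y,z$ follow directly from $A_0=\{1\}$, $(h_2)$ and $(h_3)$. Associativity $(h_1)$ then comes for free: in the constructed structure $1+x=A_x$, so $(k_1)$ is precisely the identity $(1+x)+y=(1+y)+x$, and with $(h_2)$--$(h_5)$ in place Lemma~\ref{lem_ass cond} delivers $(h_1)$.

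For the converse, given a hyperfield $H$ I would set $A_x:=1+x$. By Proposition~\ref{prop_hyperfield properties}(i),(iii) the induced hyperaddition --- namely $x+y=x(1+x^{-1}y)$ for $x\neq0$ and $0+y=\{y\}$ --- coincides with the original $+$, and $(k_1)$--$(k_4)$ are read off from the same proposition: $(k_1)$ is (iv), $(k_2)$ is (iii), $(k_4)$ is (v), while $(k_3)$ holds because by $(h_3)$ the unique $t$ with $0\in 1+t$ is $t=-1$. I expect the verification of $(h_4)$ in the forward direction to be the main obstacle: it is the only step needing a non-obvious manipulation, the remaining axioms being either immediate from the definition or reducible to results already established in the paper.
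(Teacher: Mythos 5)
Your proposal is correct and follows essentially the same route as the paper's proof: establish $x+0=\{x\}$ from $(k_3)$ and $(k_4)$, verify $(h_2)$--$(h_5)$ directly from $(k_2)$--$(k_4)$, obtain $(h_1)$ from $(k_1)$ via Lemma~\ref{lem_ass cond}, and read off the converse from Proposition~\ref{prop_hyperfield properties}. The only (welcome) difference is that you make explicit the identity $(-1)^{-1}=-1$, which the paper's verification of $(h_4)$ uses tacitly, and your reversibility argument applies $(k_4)$ twice instead of $(k_2)$ followed by $(k_4)$ --- a cosmetic variation.
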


\begin{proof}
To prove the second part of the theorem, it suffices to observe that in any hyperfield $H$, the family $\mathcal{H} = \{ A_x \}_{x \in H}$ defined by $A_x = 1 + x$ satisfies conditions $(k_1)$--$(k_4)$. Hence, we now focus on the first part of the theorem.

Let us begin by showing that for every $x \in H$,
\begin{equation} \label{eq:x+0}
    x + 0 = \{ x \} = 0 + x.
\end{equation}

The second equality as well as the case of $x = 0$ follow directly from (\ref{def_hypersum}). Now assume that $x \neq 0$ and let $y \in x + 0$. Using (\ref{def_hypersum}), we obtain $y \in x \cdot A_0$. Hence, $x^{-1}y \in A_0$. By condition $(k_4)$, it follows that $0 \in A_{-x^{-1}y}$, and by $(k_3)$ we have $-x^{-1}y = -1$, which implies $x = y$. Therefore, $x + 0 = \{ x \}$.

The fact that axiom $(h_2)$ holds follows from equation~(\ref{eq:x+0}) for $x = 0$ and from condition $(k_2)$ for $x \neq 0$.
Indeed,
\begin{align*}
    x + y = x \cdot A_{x^{-1}y} = A_{xy^{-1}} \cdot y = y + x.
\end{align*}
Next, let us show that axiom $(h_3)$ is satisfied. If $x = 0$, then equation~(\ref{eq:x+0}) implies that $0 \in x + t$ if and only if $t = 0$, so $-x = 0$ is uniquely determined in this case. Now take $x \in H$ with $x \neq 0$ and assume that $0 \in x + y$, i.e.,
\[
0 \in A_{x^{-1}y}.
\]
From condition~$(k_3)$, we have $x^{-1}y = -1$, hence $y = -x$.

To prove axiom $(h_4)$, take $z \in x + y$. If $x = 0$ or $y = 0$, then axiom $(h_4)$ follows directly from equation~(\ref{eq:x+0}) and axiom $(h_3)$ proven above. If $x, y \neq 0$, then $z \in x \cdot A_{x^{-1}y}$. By condition~$(k_2)$, we have $z \in A_{y^{-1}x} \cdot y$. Thus, $zy^{-1} \in A_{y^{-1}x}$\;.  Now, using condition~$(k_4)$, we obtain $-y^{-1}x \in A_{-zy^{-1}}$, and hence
$
-x \in A_{-zy^{-1}} \cdot y.
$
This implies that
$
x \in A_{-zy^{-1}} \cdot (-y),
$
so finally, $x \in -y + z$.

Next, let us show that axiom $(h_5)$ is satisfied. Take  $x, y, z \in H$. In the case of $z = 0$, the statement is obvious.
Assume now that $z \neq 0$. If $x = 0$, then by equation~(\ref{eq:x+0}) we obtain
\[
z(x + y) = z(0 + y) = \{ zy \} = 0 + zy = z \cdot 0 + zy = zx + zy.
\]

Now suppose that $x \neq 0$. Using (\ref{def_hypersum}), we have
\[
z(x + y) = zx \cdot A_{x^{-1}y}=
zx \cdot A_{(zx)^{-1}zy} = zx + zy,
\]
which completes the proof of axiom $(h_5)$.

Finally, axiom $(h_1)$ follows directly from Lemma~\ref{lem_ass cond} together with condition $(k_1)$. Thus, the proof is complete.
\end{proof}

In \cite{Ameri2020}, the authors used the above construction for the algorithmic classification of hyperfields of cardinality at most 6. Our goal is to identify a minimal family and a set of conditions necessary for the classification of finite real hyperfields.

Let \((P, \cdot, 1)\) be a multiplicative group, and consider the group \(G = \{-1, 1\} \times P = -P \sqcup P\).  
Define \(H := G \sqcup \{0\}\), where \(0 \notin G\), and set \(0 \cdot a = a \cdot 0 = 0\) for every $a \in H$.  

Now, take a family of non-empty subsets \(\mathcal{P} = \{ A_x \}_{x \in P}\), with each \(A_x \subseteq P\). Let \(A_0 = \{1\}\), and for each \(x \in P\), define:
\begin{equation} \label{A-x}
 A_{-x} = \left\{ -y \in -P \cup \{0\} \mid x \in A_y \right\} \cup \left\{ y \in P \mid x^{-1} \in A_{x^{-1}y} \right\}.
\end{equation}

This setup gives us a family \(\mathcal{H} = \{A_x\}_{x \in H}\), which in turn defines a hyperaddition \(+\) on \(H\) using the formula \eqref{def_hypersum}. In this situation, we say that the hyperstructure on \(H\) is \emph{generated} by \(\mathcal{P}\).

\begin{thm} \label{thm_real_construction}
Let \(\mathcal{P} = \{ A_x \}_{x \in P}\) be a family of non-empty subsets of a multiplicative group \(P\), satisfying the following conditions for all \(x, y \in P\):
\begin{itemize}
    \item[$(kr_0)$] $\bigcup\limits_{x\in P}A_x=P$,
    \item[$(kr_1)$] \(A_x + y = A_y + x\),
    \item[$(kr_2)$] \((A_x - y) \cap P = (A_{-y} + x) \cap P\),
    \item[$(kr_3)$] \(x \cdot A_{x^{-1}y} = A_{xy^{-1}}\cdot y\).
\end{itemize}
Then the structure \((H, +, \cdot, 0, 1)\), generated by \(\mathcal{P}\), is a hyperfield. Moreover, every real hyperfield can be built in this way.
\end{thm}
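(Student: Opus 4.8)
The plan is to deduce this theorem from the already-established machinery, chiefly Theorem~\ref{thm:construckja_1} and Proposition~\ref{prop_2cond_for_asso}. The strategy has two halves, mirroring the statement: first, given a family $\mathcal P$ satisfying $(kr_0)$--$(kr_3)$, show that the generated structure is a (real) hyperfield; second, show that every real hyperfield arises this way.

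For the first half, I would start from the extended family $\mathcal H = \{A_x\}_{x \in H}$ obtained by adjoining $A_0 = \{1\}$ and the sets $A_{-x}$ defined by \eqref{A-x}, and verify that $\mathcal H$ satisfies conditions $(k_1)$--$(k_4)$ of Theorem~\ref{thm:construckja_1}; then that theorem immediately gives a hyperfield, and taking $P$ as the positive cone (using $P \sqcup -P \sqcup \{0\} = H$, $P\cdot P \subseteq P$, and $(kr_0)$, which forces $1+x \subseteq P$ for $x \in P$, hence $P+P\subseteq P$) shows it is real. Condition $(k_3)$ is essentially built into the construction: the defining formula \eqref{A-x} places $0$ in $A_{-x}$ precisely when $x \in A_1$, i.e.\ $x \in 1+1$; one checks there is a unique such $-x$, namely $-1$ itself is the unique $t$ with $0 \in A_t$, by the symmetry $(h_4)$-type reasoning. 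Condition $(k_4)$ — the reversibility $y \in A_x \Rightarrow -x \in A_{-y}$ — I would verify by a case analysis on the signs of $x$ and $y$ (four cases: both in $P$, etc.), feeding in the defining equation \eqref{A-x} for the $A_{-x}$'s; the case where both indices are positive uses $(kr_2)$ and $(kr_3)$, while the mixed cases are close to definitional. Condition $(k_1)$ over all of $H$ splits, via the sign cases, into exactly the three conditions $(a)$, $(b)$, $(c)$ of Corollary~\ref{3ass}; condition $(kr_1)$ is $(a)$, and $(kr_2)$ together with $(kr_1)$ yields $(b)$ and $(c)$ by Proposition~\ref{prop_2cond_for_asso} — this is where that proposition does its work, reducing associativity to the two positive-cone conditions we are given. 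Finally $(k_2)$ over $H$ is handled similarly: for positive indices it is literally $(kr_3)$, and the sign cases reduce to it using \eqref{A-x} together with $(k_4)$ (which we have already established).

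For the converse, I would take an arbitrary real hyperfield $(H,P)$, set $A_x := 1+x$ for $x \in P$, and check that this family of subsets of $P$ (note $1+x \subseteq P$ since $P+P\subseteq P$) satisfies $(kr_0)$--$(kr_3)$: $(kr_0)$ is exactly Proposition~\ref{prop:condition_sum_of_all}; $(kr_1)$ is Proposition~\ref{prop_hyperfield properties}(iv) restricted to $P$; $(kr_2)$ is condition $(b_+)$ of Proposition~\ref{prop_2cond_for_asso}; and $(kr_3)$ is the instance of Proposition~\ref{prop_hyperfield properties}(iii)/$(k_2)$ with positive arguments. It then remains to see that the hyperfield \emph{generated} by this $\mathcal P$ — via \eqref{def_hypersum} and \eqref{A-x} — coincides with the original $H$; for this I would show the negative sets produced by formula \eqref{A-x} agree with the true $1+(-x)$ in $H$, which follows from Proposition~\ref{prop_hyperfield properties}(v)--(vi) (the reversibility/subtraction identities) and is essentially the uniqueness already used in proving Theorem~\ref{thm:construckja_1}.

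The main obstacle, I expect, is the bookkeeping in formula \eqref{A-x}: one must show that the two pieces of the union defining $A_{-x}$ — the ``negative part'' $\{-y \mid x \in A_y\}$ and the ``positive part'' $\{y \in P \mid x^{-1} \in A_{x^{-1}y}\}$ — together give exactly the set $1-x$ that a genuine hyperfield would have, and that the resulting $\mathcal H$ is internally consistent (e.g.\ that $(k_4)$ holds in the mixed-sign cases). This is the step where $(kr_2)$ and $(kr_3)$ must be invoked in a slightly non-obvious way, and where an error in the definition of $A_{-x}$ would propagate; everything else is either a direct citation of an earlier result or a routine reduction via sign cases.
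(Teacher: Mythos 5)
Your proposal follows essentially the same route as the paper: extend $\mathcal P$ to $\mathcal H=\{A_x\}_{x\in H}$ via \eqref{A-x} and $A_0=\{1\}$, verify $(k_1)$--$(k_4)$ by sign-case analysis (with $(kr_1)$, $(kr_2)$ feeding into Proposition~\ref{prop_2cond_for_asso} and Lemma~\ref{lem_ass cond} to get associativity), invoke Theorem~\ref{thm:construckja_1}, and for the converse set $A_x:=1+x$. One slip to correct: formula \eqref{A-x} puts $0\in A_{-x}$ precisely when $x\in A_0=\{1\}$ (not when $x\in A_1=1+1$, which would generally destroy the uniqueness needed for $(k_3)$); with that fixed, your stated conclusion that $-1$ is the unique $t$ with $0\in A_t$ is exactly the paper's argument.
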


\begin{proof}

Let $\mathcal{H} = \{A_x\}_{x \in H}$ be the family of subsets induced by the family $\mathcal{P} = \{A_x\}_{x \in P}$, as described above, i.e., $A_{-x}$ is given by equation~\ref{A-x} for $x \in P$, and $A_0 = \{1\}$.

By condition $(kr_0)$, it follows that for each $x \in P$ there exists $y \in P$ such that $x \in A_y$. Then, by~\eqref{A-x}, we have $-y \in A_{-x}$, and thus the set $A_{-x}$ is non-empty for all $x \in P$.

We will show that $\mathcal{H}$ satisfies conditions $(k_1)$--$(k_4)$ of Theorem \ref{thm:construckja_1}.

First we will prove the condition $(k_4)$. Assume that $y \in A_x$. If $y \in P \cup \{0\}$, then $A_{-y} = \{ -z \in -P \cup \{0\} \mid y \in A_z \} \cup \{ z \in P \mid y^{-1} \in A_{y^{-1}z} \}$. Therefore, if $x \in P \cup \{0\}$, then $-x \in A_{-y}$ directly from the definition of $A_{-y}$. If $x \in -P$, i.e., $x = -t$ for some $t \in P$, then $y \in A_{-t} \cap P$, which implies that $t^{-1} \in A_{t^{-1}y}$. Thus, we have $1 = t t^{-1} \in t\,A_{t^{-1}y} = y\,A_{y^{-1}t}$, where the last equality follows from $(kr_3)$. Therefore, $y^{-1} \in A_{y^{-1}t}$, and consequently $-x = t \in A_{-y}$. Now suppose that $y \in -P$, then also $x \in -P$ (since $A_x \subseteq P$ for $x \in P$). Assume $x = -t$ for some $t \in P$. We have $y \in A_{-t}$, which, by the definition of $A_{-t}$, implies that $-x = t \in A_{-y}$.

To prove $(k_3)$, assume that $0 \in A_{-x}$ for some $x \in P$ (note that $0 \notin A_{x}$, because $A_x \subseteq P$ for $x \in P \cup \{0\}$). By the definition of $A_{-x}$, this holds if and only if $x \in A_0$, hence $x = 1$.

The condition $(k_2)$ holds when $xy^{-1} \in P$. If $x, y \in P$, then we use $(kr_3)$; and if $x, y \in -P$, then we apply $(kr_3)$ to $-x$ and $-y$. Now, consider the case $xy^{-1} \in -P$. Without loss of generality, we may assume that $x \in P$ and $y \in -P$, i.e., $-y \in P$. Then,
$$
t \in x \cdot A_{x^{-1}y} = x \cdot A_{-(-y)x^{-1}} \Leftrightarrow tx^{-1} \in A_{-(-y)x^{-1}} \Leftrightarrow x(-y)^{-1} \in A_{t(-y)^{-1}} \Leftrightarrow $$
$$-t(-y)^{-1} \in A_{-x(-y)^{-1}} \Leftrightarrow t \in y \cdot A_{xy^{-1}}.
$$
In this argument, the definition of $A_{-z}$ (with $z\in P$) was used twice.

Note that conditions $(kr_1)$ and $(kr_2)$ correspond to conditions $(a)$ and $(b_+)$ in Proposition \ref{prop_2cond_for_asso}, 
which, under the assumption that the remaining axioms hold, are equivalent to condition~$(2)$ in Lemma \ref{lem_ass cond}. 
This latter condition, assuming $(k_2)$--$(k_4)$, is in turn equivalent to $(k_1)$.

For a real hyperfield $H$ with a positive cone $P$, the family of sets $A_x := 1 + x$ for $x \in P$ satisfies conditions $(kr_1)$--$(kr_3)$, and the hyperfield generated by this family coincides with $H$, which follows directly from the axioms of $H$. This proves the second part of the theorem. 
\end{proof}

\begin{rem}
   The set of axioms in Theorem \ref{thm_real_construction} is minimal in the sense that removing any of the axioms results in a structure that is no longer a hyperfield.
\end{rem}

\section{Finite real hyperfields with a cyclic positive cones}
    
Our goal is to optimize both the number of generating sets for finite hyperfields and the number of imposed conditions on them, so that the generation of hyperfields is as computationally efficient as possible. We begin with the following observation.

\begin{prop}
  Let $G$ be a multiplicative group and let $H := G \cup \{0\}$. Let $\mathcal{H} = \{ A_x \}_{x \in H}$ be a family as in Theorem~\ref{thm:construckja_1}. Suppose $V \subseteq H$ satisfies the property:
  \begin{equation} \label{shrink}
    \forall x \in G,\quad x \in V \ \text{or} \ x^{-1} \in V.
  \end{equation}
  Then the family $\mathcal{H}$ is uniquely determined by its subfamily $\{ A_x \}_{x \in V}$.
\end{prop}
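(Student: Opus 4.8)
The plan is to recover every set $A_y$ with $y\in G\setminus V$ from the data $\{A_x\}_{x\in V}$ together with the structural conditions $(k_1)$–$(k_4)$ already available in Theorem~\ref{thm:construckja_1}. By the hypothesis \eqref{shrink}, if $y\in G$ and $y\notin V$, then $y^{-1}\in V$, so $A_{y^{-1}}$ is known; the whole task is therefore to express $A_y$ in terms of $A_{y^{-1}}$ using the axioms, and also to handle $A_0$ and $A_{-1}$ which are pinned down outright.

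First I would dispose of the trivial coordinates. For $x=0$, equation~\eqref{eq:x+0} in the proof of Theorem~\ref{thm:construckja_1} gives $A_0=\{1\}$ regardless of anything, so $A_0$ is determined. Likewise $A_{-1}$ contains $0$ by $(k_3)$; more to the point, every $A_x$ with $x\in V$ is given, so the only indices left to treat are $y\in G$ with $y\notin V$, and for each such $y$ we have $y^{-1}\in V$ by \eqref{shrink}. Next, the key step: translate membership in $A_y$ into membership in a known set. By the defining formula \eqref{def_hypersum}, $z\in 1+y$ means $z\in A_y$; writing the hypersum via $(h_3)$ and Proposition~\ref{prop_hyperfield properties}(iii) — which are available since the structure is already a hyperfield — one has $1+y = y(1+y^{-1}) = y\cdot A_{y^{-1}}$. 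Hence
\[
A_y \;=\; y\cdot A_{y^{-1}},
\]
and since $y^{-1}\in V$, the right-hand side is entirely determined by the subfamily $\{A_x\}_{x\in V}$. (Equivalently one can cite condition $(k_2)$ directly with the pair $(1,y)$: it reads $1\cdot A_{1^{-1}\cdot y}=A_{1\cdot y^{-1}}\cdot y$, i.e. $A_y=A_{y^{-1}}\cdot y$.) This covers all remaining indices, so $\mathcal H$ is uniquely determined by $\{A_x\}_{x\in V}$.

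I do not expect a serious obstacle here: the content is essentially the single identity $A_y=y\cdot A_{y^{-1}}$, which is exactly what $(k_2)$ (equivalently the commutativity/translation statement in Proposition~\ref{prop_hyperfield properties}(iii)) supplies, and the hypothesis \eqref{shrink} is tailored so that for every index at least one of $y,y^{-1}$ lies in $V$. The only point requiring a word of care is that the proposition is stated for an arbitrary family $\mathcal H$ "as in Theorem~\ref{thm:construckja_1}", i.e. one already known to satisfy $(k_1)$–$(k_4)$ and hence to define a hyperfield; so invoking hyperfield identities is legitimate, and one should note explicitly that it is $(k_2)$ (or $(h_5)$ together with $(h_3)$) that is being used, rather than re-deriving it. If one wanted to avoid any appeal to the full hyperfield structure and argue purely from the $(k_i)$, the same computation goes through verbatim from $(k_2)$ alone with $x=1$.
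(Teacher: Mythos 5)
Your proof is correct and follows essentially the same route as the paper: the paper also invokes condition $(k_2)$ with one argument set to $1$ to obtain $A_x = x\cdot A_{x^{-1}}$ and then uses \eqref{shrink} to conclude. Your extra remarks about $A_0=\{1\}$ and the equivalent derivation via the hyperfield identities are harmless elaborations of the same one-line argument.
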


\begin{proof}
  If $x \notin V$, then by \eqref{shrink}, we have $x^{-1} \in V$. By condition $(k_2)$ with $y = 1$, it follows that $A_x = x \cdot A_{x^{-1}}$. Hence, each set $A_x$ for $x \in G$ is determined by some $A_{x'}$ with $x' \in V$.
\end{proof}

From now on, we will focus on finite real hyperfields whose positive cone is a cyclic group. In this setting, we obtain the following corollary.

\begin{cor}\label{prop_realhyp_determined_cylic}
Let $H$ be a finite real hyperfield with a cyclic positive cone $P = \langle a \rangle$ of order $N$. Assume that the family $\mathcal{P} = \{ A_{a^i} \}_{i=0}^{N-1}$ generates $H$.  
Then the family $\mathcal{P}$ is uniquely determined by the subfamily
\[
    A_{a^0}, A_{a^1}, \dots, A_{a^K}, \quad \textnormal{where} \quad K = \lfloor N/2 \rfloor.
\]
\end{cor}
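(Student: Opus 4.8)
The plan is to deduce everything from a single specialization of condition $(kr_3)$ of Theorem~\ref{thm_real_construction}. Setting $y = 1$ in $(kr_3)$ gives, for every $x \in P$,
\[
    A_x = x \cdot A_{x^{-1}},
\]
so each set $A_x$ is completely determined by $A_{x^{-1}}$. This is exactly the self-inverse pairing underlying the proposition immediately preceding the corollary, now used inside the cyclic group $P = \langle a\rangle$. Writing $x = a^i$ and using $a^{-i} = a^{N-i}$, one obtains
\[
    A_{a^i} = a^i \cdot A_{a^{N-i}} \qquad (1 \le i \le N-1).
\]

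It then remains to check that for every exponent $i$ with $K < i \le N-1$ the partner exponent $N-i$ lies in $\{1,\dots,K\}$, so that the right-hand side above refers only to a set from the listed subfamily $A_{a^0},\dots,A_{a^K}$. Trivially $1 \le N-i \le N-1$, so only the upper bound $N-i \le K = \lfloor N/2\rfloor$ needs an argument, and I would treat the two parities of $N$ separately: if $N = 2K$ and $i \ge K+1$ then $N-i \le K-1 \le K$; if $N = 2K+1$ and $i \ge K+1$ then $N-i \le K$. In both cases $N - i \le K$, which is precisely why the cutoff $K = \lfloor N/2 \rfloor$ is the right one — it is exactly the smallest value making the involution $i \leftrightarrow N-i$ send every exponent of $\{0,1,\dots,N-1\}$ into $\{0,1,\dots,K\}$.

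Combining the two points: the sets $A_{a^0},\dots,A_{a^K}$ are given outright, and each remaining $A_{a^i}$ with $K < i \le N-1$ equals $a^i \cdot A_{a^{N-i}}$ with $A_{a^{N-i}}$ among the given ones; hence the whole family $\mathcal{P}$ is uniquely recovered, which is the assertion. (If one also wants to stress that the hyperfield $H$ itself is determined, this follows since $A_0 = \{1\}$ and the sets $A_{-x}$ for $x \in P$ are expressed in terms of $\mathcal{P}$ by formula~\eqref{A-x}, after which the hyperaddition is given by~\eqref{def_hypersum}.) I do not expect any real obstacle here: the only non-formal ingredient is the elementary exponent bound above, and the rest is a direct application of $(kr_3)$.
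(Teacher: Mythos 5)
Your proof is correct and follows essentially the same route as the paper: the paper proves a slightly more general proposition (any $V$ with $x\in V$ or $x^{-1}\in V$ determines the whole family, via $(k_2)$ with $y=1$ giving $A_x = x\cdot A_{x^{-1}}$) and then specializes to $V=\{a^0,\dots,a^K\}$, which is exactly your argument. The exponent check $N-i\le K$ for $i>K$ is the same elementary verification in both cases.
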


In order to define a hyperoperation on the set $H = -P \sqcup \{0\} \sqcup P$, where $P$ is a cyclic group $P = \langle a \rangle$ of order $N$, we follow the algorithm below: 
\begin{itemize}
    \item[\textbf{a.}] Fix the sets $A_{a^l} \subseteq P$ for $l = 0, 1, \dots, K$, where $K = \lfloor N/2 \rfloor$.
    
    \item[\textbf{b.}] Determine the remaining sets $A_{a^k}$ for $k = K+1, K+2, \dots, N-1$ using the formula
    \[
        A_{a^k} = a^k \cdot A_{a^{N-k}}.
    \]
    
    \item[\textbf{c.}] For even $N$, verify that the condition $a^K \cdot A_{a^K} = A_{a^K}$ holds.
    
    \item[\textbf{d.}] Compute the sets $A_{-a^k}$ for $k=0,1,\ldots, N-1$  using formula \eqref{A-x}.
    
    \item[\textbf{e.}] Determine the results of the hyperoperation on $H$ using formula \eqref{def_hypersum}.
\end{itemize}

\begin{thm}
\label{thm:to:daje:hipercialo}
Let $H$ be a hyperstructure determined by the steps \textbf{a.}--\textbf{e.} above. Then condition $(kr_3)$ holds and $H$ is a (real) hyperfield if and only if conditions $(kr_0)$, $(kr_1)$ and $(kr_2)$ of Theorem \ref{thm_real_construction} hold.
\end{thm}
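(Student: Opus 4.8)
The goal is to show that, for a hyperstructure $H$ built by steps \textbf{a.}--\textbf{e.}, condition $(kr_3)$ is automatic, and that $H$ is a real hyperfield precisely when $(kr_0)$, $(kr_1)$, $(kr_2)$ hold. The strategy is to reduce everything to Theorem~\ref{thm_real_construction}: once I know the family $\mathcal{P}=\{A_x\}_{x\in P}$ obtained in steps \textbf{a.}--\textbf{d.} is a well-defined family of non-empty subsets of $P$ with $A_0=\{1\}$, satisfying $(kr_0)$--$(kr_3)$, that theorem immediately gives that $H$ (with the hyperaddition from step \textbf{e.}, i.e.\ formula \eqref{def_hypersum}) is a real hyperfield. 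So the only genuine work is (1) showing the construction in \textbf{a.}--\textbf{d.} is consistent, i.e.\ that the sets $A_{a^k}$ for $k=K+1,\dots,N-1$ defined in step \textbf{b.}\ are compatible with whatever constraints the cyclic structure imposes, and (2) showing $(kr_3)$ holds unconditionally.

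First I would treat $(kr_3)$, the equality $x\cdot A_{x^{-1}y}=A_{xy^{-1}}\cdot y$ for $x,y\in P$. Writing $x=a^i$, $y=a^j$, this says $a^i\cdot A_{a^{j-i}}=a^{i-j}\cdot y$-type relation; by an easy substitution it is equivalent to the single-parameter family of identities $a^m\cdot A_{a^{-m}}=A_{a^m}$ for all $m$ (indices mod $N$), equivalently $A_{a^k}=a^k\cdot A_{a^{N-k}}$ for all $k=0,1,\dots,N-1$. For $k=0$ this is $A_0=A_0$ (using $A_0=\{1\}$ and the convention $a^N=1$). For $k=1,\dots,K$ it must be checked that $A_{a^k}=a^k\cdot A_{a^{N-k}}$; but step \textbf{b.}\ \emph{defines} $A_{a^{N-k}}:=a^{N-k}\cdot A_{a^{k}}$, so $a^k\cdot A_{a^{N-k}}=a^k\cdot a^{N-k}\cdot A_{a^k}=a^N\cdot A_{a^k}=A_{a^k}$, as desired. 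For $k=K+1,\dots,N-1$, write $k=N-l$ with $l=1,\dots,K$; then the required identity $A_{a^{N-l}}=a^{N-l}\cdot A_{a^{l}}$ is exactly the defining equation of step \textbf{b.}. The one remaining index, present only when $N$ is even, is $k=K=N/2$: there $N-k=k$, and the identity becomes $A_{a^K}=a^K\cdot A_{a^K}$, which is precisely the condition imposed in step \textbf{c.}. Hence steps \textbf{b.}\ and \textbf{c.}\ are exactly what is needed to force $(kr_3)$, and no further hypotheses are required; moreover the same computation shows the sets $A_{a^k}$ are non-empty for all $k$ (each equals a translate of a non-empty $A_{a^l}$, $0\le l\le K$, and the $A_{a^l}$ are non-empty by step \textbf{a.}), so $\mathcal{P}$ is a legitimate family to which Theorem~\ref{thm_real_construction} applies.

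With $(kr_3)$ secured and $\mathcal{P}$ a well-defined non-empty family with $A_0=\{1\}$ and each $A_x\subseteq P$, the ``if'' direction is immediate: if $(kr_0)$, $(kr_1)$, $(kr_2)$ also hold then all four hypotheses $(kr_0)$--$(kr_3)$ of Theorem~\ref{thm_real_construction} are satisfied, so the structure $H$ generated by $\mathcal{P}$ --- which by construction (steps \textbf{d.}, \textbf{e.}\ reproduce formulas \eqref{A-x} and \eqref{def_hypersum}) is exactly the structure considered in that theorem --- is a hyperfield, and it is real with positive cone $P$ since $H=-P\sqcup\{0\}\sqcup P$ and the defining data live in $P$. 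For the ``only if'' direction, suppose $H$ is a hyperfield. Since the hyperaddition is generated by $\mathcal{P}$ via \eqref{def_hypersum} and \eqref{A-x}, the sets $A_x$ for $x\in P$ coincide with $1+x$, so Theorem~\ref{thm_real_construction} (its second part, together with the ``moreover'' discussion) forces $\mathcal{P}$ to satisfy $(kr_1)$--$(kr_3)$; and $(kr_0)$ holds by Proposition~\ref{prop:condition_sum_of_all}, since a hyperfield of the form $-P\sqcup\{0\}\sqcup P$ built this way is a finite real hyperfield, for which $\bigcup_{x\in P}(1+x)=P$.

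\textbf{Main obstacle.} The delicate point is bookkeeping with indices modulo $N$, especially the boundary case $N$ even, $k=K=N/2$: one must see clearly why step \textbf{c.}\ is the \emph{only} extra constraint needed beyond the mechanical definition in step \textbf{b.}, and why for $N$ odd no analogue of step \textbf{c.}\ arises (the involution $k\mapsto N-k$ on $\{1,\dots,N-1\}$ is fixed-point-free). A secondary subtlety is making the identification ``$H$ generated by $\mathcal{P}$ via steps \textbf{d.}--\textbf{e.}'' $=$ ``$H$ as in Theorem~\ref{thm_real_construction}'' fully explicit, i.e.\ checking that step \textbf{d.}'s use of \eqref{A-x} and step \textbf{e.}'s use of \eqref{def_hypersum} literally match the set-up preceding Theorem~\ref{thm_real_construction}; this is routine but should be stated so that the appeal to that theorem is clean.
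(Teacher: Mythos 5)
Your proposal is correct and follows essentially the same route as the paper: reduce $(kr_3)$ to the single-parameter identity $A_{a^k}=a^k\cdot A_{a^{N-k}}$, observe that steps \textbf{b.} and \textbf{c.} enforce exactly this, and then invoke Theorem~\ref{thm_real_construction} for the equivalence with $(kr_0)$--$(kr_2)$. (Only a cosmetic slip: in your $k=0$ case the relevant set is $A_{a^0}=A_1=1+1$, not $A_0=\{1\}$; the identity there is trivially true either way.)
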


\begin{proof}
It suffices to show that condition $(kr_3)$ holds. Let $x, y \in P$, where $x = a^k$ and $y = a^l$ for some $k, l < N$. Then we compute:
\begin{align*}
    x \cdot A_{x^{-1}y} &= a^k \cdot A_{a^{-k}a^l} = a^k \cdot A_{a^{l-k}} \\
    &= a^k \cdot a^{l-k} \cdot A_{a^{N - (l - k)}} = a^l \cdot A_{a^{k - l}} \\
    &= a^l \cdot A_{a^k a^{-l}} = y \cdot A_{y^{-1}x} = x \cdot A_{xy^{-1}}.
\end{align*}

\end{proof}
Let $H_1$ and $H_2$ be real hyperfields, with positive cones $P_1$ and $P_2$ respectively, such that there exists a (group) isomorphism $\phi: P_1 \rightarrow P_2$. By extending this map via $\phi(0) = 0$ and $\phi(-a) = -\phi(a)$ for all $a \in P_1$, we obtain a bijection $\phi: H_1 \rightarrow H_2$. The following proposition characterizes when $\phi$ is an isomorphism of real hyperfields.

\begin{prop}
Under the assumptions above, $\phi$ is an isomorphism of real hyperfields if and only if  
\[
\phi(1+a) = 1 + \phi(a), \quad \text{for all } a \in P_1.
\]
\end{prop}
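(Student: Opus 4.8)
The plan is to apply Proposition~\ref{prop:strict_hommorphism_condition}, which already handles the case of a general strict homomorphism induced by a multiplicative homomorphism, and to show that in our situation the extra structure (the extension rule $\phi(-a)=-\phi(a)$, and the fact that $\phi$ maps $P_1$ onto $P_2$) upgrades the equivalence stated there to the desired equivalence for an isomorphism of \emph{real} hyperfields. Concretely, I would first check that $\phi\colon H_1\to H_2$, as extended, is a bijection that carries the multiplicative group $H_1^*=-P_1\sqcup P_1$ isomorphically onto $H_2^*=-P_2\sqcup P_2$: multiplicativity on $P_1$ is the hypothesis, multiplicativity involving $-1$ follows from $\phi(-a)=-\phi(a)$ and $\phi(-1)=-1$, and $\phi(0)=0$ by construction. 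Since $\phi(P_1)=P_2$ by surjectivity of the group isomorphism, $\phi$ automatically respects positive cones, so the only thing separating ``$\phi$ is an isomorphism of real hyperfields'' from ``$\phi$ is a strict isomorphism of hyperfields'' is vacuous here.

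The forward direction is then immediate: if $\phi$ is an isomorphism of real hyperfields it is in particular a strict homomorphism, so Proposition~\ref{prop:strict_hommorphism_condition} gives $\varphi(1+x)=1+\varphi(x)$ for all $x\in H_1$, and restricting to $x\in P_1$ yields the stated identity. For the converse, I would assume $\phi(1+a)=1+\phi(a)$ for all $a\in P_1$ and promote this to $\phi(1+x)=1+\phi(x)$ for all $x\in H_1$; once that is done, Proposition~\ref{prop:strict_hommorphism_condition} shows $\phi$ is a strict homomorphism, hence (being bijective) an isomorphism of hyperfields, and the positive-cone condition is free, so $\phi$ is an isomorphism of real hyperfields.

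The substance of the proof is therefore the promotion step: deduce the identity $\phi(1+x)=1+\phi(x)$ for $x=0$, for $x\in P_1$ (given), and for $x\in -P_1$. The case $x=0$ is trivial since $1+0=\{1\}$ and $\phi(1)=1$. For $x=-a$ with $a\in P_1$, I would use Proposition~\ref{prop_hyperfield properties}(v) (or (vi)): membership $z\in 1+(-a)$ is equivalent to $a\in 1+(-z)$; apply $\phi$, using that $\phi$ and its inverse both respect negation and that $\phi$ restricted to $P_1$ (together with $-P_1$) is a bijection, to translate this into the corresponding statement in $H_2$. One must be a little careful because $z$ ranges over all of $H_1$, so $-z$ may lie in $P_1$, in $-P_1$, or be $0$; but in each subcase the hypothesis (plus the already-established cases $x=0$ and $x\in P_1$, plus part~(iii) of Proposition~\ref{prop_hyperfield properties} to reduce $1+x$ with $x\in -P_1$ to a scalar multiple of some $1+x'$ with $x'\in P_1$) pins down the image. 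I expect this bookkeeping over the three sign-cases, and the correct invocation of the symmetry properties $(v)$/$(vi)$ and the multiplicative identity $(iii)$, to be the main technical obstacle; everything else is a direct appeal to Proposition~\ref{prop:strict_hommorphism_condition} and Definition~\ref{def_real isom}.
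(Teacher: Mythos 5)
Your overall strategy is the same as the paper's: reduce everything to Proposition~\ref{prop:strict_hommorphism_condition}, note that the forward direction and the cases $x=0$ and $x\in P_1$ are immediate, and concentrate on proving $\phi(1-a)=1-\phi(a)$ for $a\in P_1$ by splitting the elements of $1-a$ according to sign. The subcase of a negative element $z\in(1-a)\cap(-P_1)$ is handled exactly as you describe: reversibility turns $z\in 1-a$ into $a\in 1+(-z)$ with $-z\in P_1$, and the hypothesis applies.

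The gap is in the subcase that carries the real content, namely a positive element $z\in(1-a)\cap P_1$. The mechanism you name for it --- using Proposition~\ref{prop_hyperfield properties}(iii) ``to reduce $1+x$ with $x\in -P_1$ to a scalar multiple of some $1+x'$ with $x'\in P_1$'' --- does not exist. Applying (iii) to $1+(-z)$ only yields $(-z)\bigl(1+(-z)^{-1}\bigr)=(-z)(1-z^{-1})$, whose inner argument is again negative, so you go in circles; and no identity $1-z=c\,(1+x')$ with $x'\in P_1$ can hold as sets, because $1+x'\subseteq P_1$ (positive cones are closed under addition), so $c\,(1+x')$ lies in a single coset $cP_1$, whereas $1-z$ in general meets both $P_1$ and $-P_1$ (see the proposition following Proposition~\ref{prop:condition_sum_of_all}). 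Likewise, your route via property (v) turns $z\in 1-a$ into $a\in 1-z$, which is precisely the kind of statement you are trying to control. The correct move, which is what the paper does, is to flip the membership \emph{first} using property (vi): $z\in 1-a$ is equivalent to $1\in z+a$, and now $z+a=a(1+za^{-1})$ with $za^{-1}\in P_1$, so the hypothesis gives $1\in\phi(a)\bigl(1+\phi(z)\phi(a)^{-1}\bigr)=\phi(a)+\phi(z)$, i.e.\ $\phi(z)\in 1-\phi(a)$. With that substitution (and the observation that equality of sets, rather than one inclusion, follows from bijectivity or from running the equivalences in both directions), your plan closes; as written, the decisive subcase is not actually resolved.
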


\begin{proof}
In view of Proposition \ref{prop:strict_hommorphism_condition}, it suffices to prove that $\phi(1 - a) = 1 - \phi(a)$ for all $a \in P_1$. Let $x \in 1 - a$.

If $x \in P_1 \cup \{0\}$, then:
\[
x \in 1 - a \Leftrightarrow 1 \in x + a \Leftrightarrow 1 \in a (1 + x a^{-1}).
\]
Applying $\phi$, we obtain
\[
1 = \phi(1) \in \phi(a)(1 + \phi(x)\phi(a)^{-1}) = \phi(a) + \phi(x),
\]
which implies $\phi(x) \in 1 - \phi(a)$.

If $x \in -P_1$, then
\[
x \in 1 - a \Leftrightarrow a \in 1 + (-x),
\]
so
\[
\phi(a) \in 1 + \phi(-x) = 1 - \phi(x),
\]
which again implies $\phi(x) \in 1 - \phi(a)$.

Thus, $\phi(1 - a) \subseteq 1 - \phi(a)$. The reverse inclusion follows from the bijectivity of $\phi$.
\end{proof}

Let $H$ be a finite real hyperfield with positive cone $P = \langle a \rangle$, a cyclic group of order $N$. Every automorphism of $P$ is given by
\[
a^m \mapsto a^{mk \bmod N},
\]
where $k$ is coprime to $N$. This, combined with the proposition above, yields the following corollary.

\begin{cor}\label{cor:iso_classes}
Let $H$ be a finite real hyperfield with positive cone $P = \langle a \rangle$ of order $N$, generated by the family $\mathcal{P} = \big\{A_{a^i}\big\}_{0 \leq i < N}$ (as in Theorem \ref{thm:to:daje:hipercialo}). Then the isomorphism class of $H$ is given by
\[
\big[H\big] = \left\{ H_k \mid \gcd(k, N) = 1,\ 1 \leq k < N \right\},
\]
where the hyperfield $H_k$ is generated by the family
\[
\mathcal{P}_k = \big\{A'_{a^i}\big\}_{0 \leq i < N}, \quad \text{with } 
A'_{a^i} = \left\{ a^{mk \bmod N} \mid a^m \in A_{a^{i k^{-1} \bmod N} }\right\},
\]
and $k^{-1}$ denotes the inverse of $k$ modulo $N$.
\end{cor}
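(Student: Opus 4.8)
The plan is to reduce the statement to the previous proposition, which says that a group isomorphism $\phi: P_1 \to P_2$ (extended by $\phi(0)=0$, $\phi(-a)=-\phi(a)$) is an isomorphism of real hyperfields precisely when $\phi(1+a) = 1+\phi(a)$ for all $a \in P_1$. So the corollary is essentially a matter of (1) parametrizing all group automorphisms of a cyclic group $P = \langle a \rangle$ of order $N$, and (2) translating the condition $\phi(1+a^i) = 1+\phi(a^i)$ into the description of the generating family $\mathcal{P}_k$.

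First I would recall the standard fact that every automorphism of $P = \langle a\rangle \cong \Z/N\Z$ has the form $\varphi_k: a^m \mapsto a^{mk \bmod N}$ for some $k$ with $\gcd(k,N)=1$, $1 \le k < N$, and that distinct such $k$ give distinct automorphisms; this gives the indexing set in the statement of the isomorphism class. Next, fix such a $k$ and consider the hyperfield $H_k$ generated by the family $\mathcal{P}_k$ defined in the corollary; I would first check that $\mathcal{P}_k$ indeed satisfies the hypotheses needed to generate a hyperfield (conditions $(kr_0)$--$(kr_3)$, equivalently the steps \textbf{a.}--\textbf{e.}), but this is automatic: $\mathcal{P}_k$ is the image of the family $\mathcal{P}$ of $H$ under a relabelling of $P$ by the bijection $\varphi_k$, so any identity among the $A_{a^i}$ transports verbatim. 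The point of the specific formula $A'_{a^i} = \{ a^{mk \bmod N} \mid a^m \in A_{a^{ik^{-1}\bmod N}}\}$ is exactly that it encodes $\varphi_k(1 + a^j) = 1 + \varphi_k(a^j)$: setting $i = jk \bmod N$, the left side is $\varphi_k(A_{a^j})$ and the right side is $A'_{a^i}$, so the two families are matched precisely so that $\varphi_k$ satisfies the criterion of the preceding proposition. Hence $\varphi_k$ is an isomorphism of real hyperfields $H \to H_k$, showing $H_k \in [H]$.

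Conversely, I would take an arbitrary real hyperfield $H'$ isomorphic to $H$ (as a real hyperfield) via some $\psi$. Since an isomorphism of real hyperfields is in particular a multiplicative group isomorphism sending $P$ onto the positive cone $P'$ of $H'$, and $P' \cong P$ is cyclic of order $N$, we may compose with a fixed identification to regard $\psi$ as an automorphism of $P$, hence $\psi = \varphi_k$ for some $k$ coprime to $N$. The criterion $\psi(1+a) = 1+\psi(a)$ then forces the generating family of $H'$ to be exactly $\mathcal{P}_k$ (by the same index bookkeeping as above, read in reverse), so $H' = H_k$ up to the identification. This establishes the reverse inclusion and completes the proof.

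I expect the only genuinely delicate point to be the index bookkeeping in the formula for $A'_{a^i}$: one must be careful that the substitution $i \mapsto ik^{-1} \bmod N$ on the subscript and $m \mapsto mk \bmod N$ on the elements are the two "halves" of conjugating the family $\{A_{a^i}\}$ by $\varphi_k$, and that the appearance of $k^{-1}$ (rather than $k$) in the subscript is correct — this is because the subscript of $A_x$ names the argument $x$ of $1+x$, which is a "contravariant" slot relative to the relabelling of elements. Everything else — parametrizing $\mathrm{Aut}(P)$, checking $\mathcal{P}_k$ generates a hyperfield, invoking the preceding proposition — is routine.
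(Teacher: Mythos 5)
Your proposal is correct and follows essentially the same route as the paper: both reduce the claim to the preceding proposition (that $\phi$ is an isomorphism of real hyperfields iff $\phi(1+a)=1+\phi(a)$), parametrize $\mathrm{Aut}(P)$ by $k$ coprime to $N$, and verify the index bookkeeping $\phi(A_{a^m})=A'_{a^i}$ iff $i=mk\bmod N$, i.e.\ $m=ik^{-1}\bmod N$. The paper's own proof records only this forward index computation, so your more explicit treatment of the converse direction and of the transport of conditions $(kr_0)$--$(kr_3)$ is a harmless elaboration of the same argument.
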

\begin{proof}
Assume that $\phi: a^m \mapsto a^{mk \bmod N}$. Then
\[
\phi(A_{a^m}) = \phi(1 + a^m) = 1 + a^{mk \bmod N} = A'_{a^i}
\]
if and only if $i = mk \bmod N$, which implies that $m = i k^{-1} \bmod N$.
\end{proof}

\section{Pseudocode}\label{sec:pseudocode}

In this paper, we consider a real hyperfield ${H}$ with a positive cone $P$ that is a finite cyclic group $\langle a \rangle$.
We will use the following notation:
\begin{itemize}
    \item $N$ denotes the order of the positive cone, i.e., $P = \{1, a, a^2, \dots, a^{N-1}\}$.
    \item $K = \left\lfloor \dfrac{N}{2} \right\rfloor$.
    \end{itemize}
Any subset $S=\{a^{i_1}, a^{i_2}, \dots, a^{i_l}\} \subseteq P$ can be identified with the set of exponents
    \[
        I^+ = \{i_1, i_2, \dots, i_l\} \subseteq \{0,1,2,\dots, N-1\}.
    \]
Next, any nonempty subset $I^+$ of the set $\{0,1,2,\dots, N-1\}$ can be uniquely identified with a number $c \in \{1, \ldots, 2^N-1\}$ via the map:
\begin{equation} \label{formula_num-set}
     I^+ \longmapsto c=\sum_{i\in I^+} 2^i.
\end{equation}
Note that $c$ can be written as a nonzero binary number $(c_{N-1} \ldots c_0)_2$ of $N$ bits and the map
\[
c = (c_{N-1} \cdots c_1 c_0)_2 \longmapsto I^+_c = \big\{ \ell \mid c_\ell = 1 \big\}
\]
is the inverse mapping to \eqref{formula_num-set}.
\begin{ex}
For \( N = 3 \), there are \( 2^3 - 1 = 7 \) nonempty subsets. The correspondence between binary numbers, index sets, and subsets of \( P \) is shown below:
\[
\begin{array}{|c|c|c|c|}
\hline
\text{Decimal } c & \text{Binary } c_2c_1c_0 & I_c & S_c \subseteq \{1, a, a^2\} \\
\hline
1 & 001 & \{0\} & \{1\} \\
2 & 010 & \{1\} & \{a\} \\
3 & 011 & \{0, 1\} & \{1, a\} \\
4 & 100 & \{2\} & \{a^2\} \\
5 & 101 & \{0, 2\} & \{1, a^2\} \\
6 & 110 & \{1, 2\} & \{a, a^2\} \\
7 & 111 & \{0, 1, 2\} & \{1, a, a^2\} \\
\hline
\end{array}
\]    
\end{ex}

Similarly to the set $I^+$, a subset $\{-a^{i_1}, -a^{i_2}, \dots, -a^{i_l}\} \subseteq -P$ is identified with the set
    \[
        I^- = \{i_1, i_2, \dots, i_l\}.
    \]
Once again, each set $I^-$ can be uniquely identified with a number $-c \in \big\{-2^N+1, \ldots, -1\big\}$ via the map:
\begin{equation*} \label{formula_num-set}
     I^- \longmapsto -c=-\sum_{i\in I^-} 2^i.
\end{equation*}
    
    Moreover, we will use the following objects:
    \begin{itemize}
    \item The shift operator is defined by
    \[
        \shift\big(k, I^+\big) := \big\{(k + i) \bmod N \mid i \in I^+\big\},
    \]
    which corresponds to the multiplication of the set $\big\{a^l \mid l \in I\big\}$ by $a^k$.
    \item \texttt{All\_Sets} denotes the array of all nonzero subsets of the form $I_i^+$, where $\texttt{All\_Sets}[i] = I_i^+$ for $i \in \{1, 2, \dots, 2^N - 1\}$.
\end{itemize}

According to Proposition~\ref{prop_realhyp_determined_cylic}, each real hyperfield is uniquely determined by a sequence of nonzero subsets 
\[
\big(1 + a^0,\ 1 + a^1,\ \dots,\ 1 + a^K\big), \quad \text{where } 1 + a^i \subseteq P.
\]
This sequence can be encoded as a $(K+1)$-tuple $\underline{s} = (s_0, s_1, \dots, s_K)$, where each $s_\ell$ denotes the index of the subset $1 + a^\ell$ in the array \texttt{All\_Sets}. We assume that the array \texttt{All\_Sets} is available to all functions and procedures defined in this section. 

Note that when \( N \) is even, i.e., \( N = 2K \), the following identity holds:
\begin{equation}\label{even_condition}
    a^K \cdot \big(1 + a^K\big) = 1 + a^K.
\end{equation}

This condition can be efficiently verified using the function \textsc{Even\_Cond}, which checks whether the set \( 1 + a^K \) is invariant under multiplication by \( a^K \), as implied by equation~\eqref{even_condition}.
\begin{algorithm}[H]
\caption{Verification of condition \eqref{even_condition}}
\begin{algorithmic}[1]
\Function{Even\_Cond}{$\boldsymbol{\underline{s}}$} 
\Comment{Check if \( a^K \cdot (1 + a^K) = 1 + a^K \)}
\Input
  \Desc{$\boldsymbol{\underline{s}}$}{$\quad (K+1)$-tuple \( (s_0, \dots, s_K) \) representing the hyperstructure \( \mathcal{H}(\underline{s}) \)}
\EndInput
\Output
  \Desc{$\boldsymbol{\texttt{true}}$}{\quad if and only if the identity \eqref{even_condition} holds}
\EndOutput
\If{ $\shift\big(K,\ \texttt{All\_Sets}[s_K]\big) = \texttt{All\_Sets}[s_K]$ } 
    \State \Return \texttt{true}
\Else
    \State \Return \texttt{false}
\EndIf
\EndFunction
\end{algorithmic}
\end{algorithm}

To model and manipulate algebraic hyperstructures arising from the cyclic positive cone \( P = \{1, a, a^2, \dots, a^{N-1}\} \), we define a class \texttt{Hyperstructure} that implements the key components and operations required in our computational framework.

The class stores the order \( N \) of the positive cone, and a data structure \( G[\ ] \), which encodes sets of the form \( 1 + a^\ell \), indexed by \( \ell \in \{0, \dots, N-1\} \). The hyperstructure is determined by a $(K+1)$-tuple of integers \( \underline{s} = (s_0, s_1, \dots, s_K) \), provided at construction.

The public interface of the class includes arithmetic methods such as binary and ternary hyper-sums, as well as positive and negative hyper-differences. Additional methods test structural properties (e.g., whether the hyperstructure forms a hyperfield), compute the C-characteristic, and retrieve the defining tuple \( \underline{s} \).

The pseudocode below outlines the design of the \texttt{Hyperstructure} class:
\begin{algorithm}[H]
\caption{Hyperstructure class}
\begin{algorithmic}[1]
\State \textbf{class} Hyperstructure
\State \hspace{1em} \textbf{private:}
\State \hspace{2em}   $\boldsymbol{N}$\quad the order of the cyclic positive cone \( P \)
\State \hspace{2em}   $\boldsymbol{G[\ ]}$\quad an array of length \( N \), where \( G[\ell] \) encodes the set \( 1 + a^\ell \)
\State \hspace{1em} \textbf{public:}
\State \hspace{2em} \textbf{Constructor}($\boldsymbol{N}$: integer,  $\boldsymbol{\underline{s}}$: $(K+1)$-tuple of integers)
\State
\State \hspace{2em} \textbf{Method} \Call{Sum}{$k, l$ : integers}
\Comment{Compute the sum \( a^k + a^l \)}
\State \hspace{2em} \textbf{Method} \Call{Sum}{$k, l, m$ : integers}
\Comment{Compute the sum \( (a^m + a^k) + a^l \)}
\State \hspace{2em} \textbf{Method} \Call{Diff\_Positive}{$k, l$ : integers}
\Comment{Compute the positive part of \( a^k - a^l \)}
\State \hspace{2em} \textbf{Method} \Call{Diff\_Negative}{$k, l$ : integers}
\Comment{Compute the negative part of \( a^k - a^l \)}
\State \hspace{2em} \textbf{Method} \Call{Sum\_PPM\_Pos}{$m, k, l$ : integers}
\Comment{Compute \( ((a^m + a^k) - a^l)_{>0} \)}
\State \hspace{2em} \textbf{Method} \Call{Sum\_PMP\_Pos}{$m, k, l$ : integers}
\Comment{Compute \( ((a^m - a^k) + a^l)_{>0} \)}
\State 
\State \hspace{2em} \textbf{Method} \Call{is\_Hyperfield}{}
\Comment{Check whether the hyperstructure forms a hyperfield}
\State \hspace{2em} \textbf{Method} \Call{C-characteristic}{}
\Comment{Compute the C-characteristic of the hyperstructure}
\State \hspace{2em} \textbf{Method} \Call{get\_tuple}{}
\Comment{Return the defining $(K+1)$-tuple \( \underline{s} \)}
\State
\State \hspace{2em} \textbf{Method} \Call{get\_order}{}
\State \hspace{3em} \Return \( N \)
\State \hspace{2em} \textbf{Operator} \Call{[\ ]}{$i$ : integer} 
\State \hspace{3em} \Return \( G[i] \)
\end{algorithmic}
\end{algorithm}

The following procedure, referred to as the \textit{Constructor}, initializes the table of generator sets \( G[\ ] \) associated with the cyclic positive cone \( P \). Given a natural number \( N \) and a \((K+1)\)-tuple \( \underline{s} = (s_0, s_1, \dots, s_K) \), representing a hyperstructure \( \mathcal{H}(\underline{s}) \), we define \( G \in P^{\times N} \) according to the rules below.

For each index \( 0 \leq i \leq K \), the corresponding set \( G[i] \) is retrieved from a predefined lookup table \texttt{All\_Sets}, indexed by the entries of the tuple \( \underline{s} \). For higher indices \( j > K \), the sets \( G[j] \) are computed by applying a shift operation to previously defined elements, reflecting the symmetry of the structure.
\begin{algorithm}[H]
\caption{Constructor for the generator table \( G[\ ] \)}
\begin{algorithmic}[1]
\State \textbf{Constructor}($\boldsymbol{N}$, $\boldsymbol{\underline{s}}$)
\Input
    \Desc{$\boldsymbol{N}$}{order of the cyclic positive cone \( P \)}
    \Desc{$\boldsymbol{\underline{s}}$}{a $(K+1)$-tuple \( (s_0,\dots, s_K) \) defining the hyperstructure \( \mathcal{H}(\underline{s}) \)}
\EndInput
\For{$i = 0, 1, \dots, K$}
    \State $\boldsymbol{G}[i] \gets \texttt{All\_Sets}[s_i]$
\EndFor
\For{$j = K+1, K+2, \dots, N-1$}
    \State $\boldsymbol{G}[j] \gets \shift\big(j, G[N-j]\big)$
\EndFor
\end{algorithmic}
\end{algorithm}

Before defining a method for verifying associativity, we introduce auxiliary methods for computing sums and differences of elements.

First, the method \textsc{Sum} computes the sum \( a^k + a^l \), while the methods \textsc{Diff\_Positive} and \textsc{Diff\_Negative} compute the positive and negative parts of the difference \( a^k - a^l \), respectively.
\begin{algorithm}[H]
\caption{Method \textsc{Sum} - computing the sum \(a^k + a^l\)}
\begin{algorithmic}[1]
\Function{Sum}{$k, l$}
\Input
    \Desc{$k, l$}{natural numbers such that \(0 \leq k,l < N\)}
\EndInput
\Output
    \Desc{$I^+$}{the set of powers corresponding to the sum \(a^k + a^l\)}
\EndOutput
\If{$k > l$}
    \State $i \gets l - k + N$
\Else
    \State $i \gets l - k$
\EndIf
\State \Return $\shift(k, G[i])$
\EndFunction
\end{algorithmic}
\end{algorithm}
\begin{algorithm}[H]
\caption{Method \textsc{Diff\_Positive} - computing the positive part of the difference \(a^k - a^l\)}
\begin{algorithmic}[1]
\Function{Diff\_Positive}{$k, l$}
\Input
    \Desc{$k, l$}{natural numbers such that \(0 \leq k,l < N\)}
\EndInput
\Output
    \Desc{$I^+$}{the set of powers corresponding to the positive part \(\big(a^k - a^l\big)_{>0}\)}
\EndOutput
\State $I^+ \gets \emptyset$
\ForAll{$k \in \Call{Sum}{i, l}$}
        \State $I^+ \gets I^+ \cup \{i\}$
\EndFor
\State \Return $I^+$
\EndFunction
\end{algorithmic}
\end{algorithm}
\begin{algorithm}[H]
\caption{Method \textsc{Diff\_Negative} - computing the negative part of the difference \(a^k - a^l\)}
\begin{algorithmic}[1]
\Function{Diff\_Negative}{$k, l$}
\Input
    \Desc{$k, l$}{natural numbers such that \(0 \leq k,l < N\)}
\EndInput
\Output
    \Desc{$I^-$}{the set of powers corresponding to the negative part \(\big(a^k - a^l\big)_{<0}\)}
\EndOutput
\State $I^- \gets \emptyset$
\ForAll{$l \in \Call{Sum}{i, k}$}
        \State $I^- \gets I^- \cup \{i\}$
\EndFor
\State \Return $I^-$
\EndFunction
\end{algorithmic}
\end{algorithm}

The following method allows us to compute the sum \((a^m + a^k) + a^l\).
\begin{algorithm}[H]
\caption{Method \textsc{Sum} - computing the sum \((a^m + a^k) + a^l\)}
\begin{algorithmic}[1]
\Function{Sum}{$m, k, l$}
\Input
    \Desc{$m,k,l$}{\quad natural numbers such that \(0 \leq m,k,l < N\)}
\EndInput
\Output
    \Desc{$I^+$}{\quad the set of powers of \((a^m + a^k) + a^l\)}
\EndOutput
\State $I^+ \gets \emptyset$
\ForAll{$i \in \Call{Sum}{m,k}$}
    \State $I^+ \gets I^+ \cup \Call{Sum}{i,l}$
\EndFor
\State \Return $I^+$
\EndFunction
\end{algorithmic}
\end{algorithm}

Next, we define the methods that compute the positive part of \( \big(a^m + a^k\big) - a^l \) and \( \big(a^m - a^k\big) + a^l \).
\begin{algorithm}[H]
\caption{Method \textsc{Sum\_PPM\_Pos} - computing \(\Big(\big(a^m+a^k\big)-a^l\Big)_{>0}\)}
\begin{algorithmic}[1]
\Function{Sum\_PPM\_Pos}{$m,k,l$}
\Input
    \Desc{$m,k,l$}{\quad natural numbers  such that \(0 \leq m,k,l < N\)}
\EndInput
\Output
    \Desc{$I^+$}{\quad the set of powers of the positive part of \(\big(a^m+a^k\big)-a^l\)}
\EndOutput
\State $I^+ \gets \emptyset$
\ForAll{$i \in \Call{Sum}{m,k}$}
        \State $I^+ \gets I^+ \cup \Call{Diff\_Positive}{i,l}$
\EndFor
\State \Return $I^+$
\EndFunction
\end{algorithmic}
\end{algorithm}
\begin{algorithm}[H]
\caption{Method \textsc{Sum\_PMP\_Pos} - computing \(\Big(\big(a^m - a^k\big) + a^l\Big)_{>0}\)}
\begin{algorithmic}[1]
\Function{Sum\_PMP\_Pos}{$m,k,l$}
\Input
    \Desc{$m,k,l$}{\quad natural numbers  such that \(0 \leq m,k,l < N\)}
\EndInput
\Output
    \Desc{$I^+$}{\quad the set of powers of the positive part of \(\big(a^m - a^k\big) + a^l\)}
\EndOutput
\State $I^+ \gets \emptyset$
\ForAll{$i \in \Call{Diff\_Positive}{m,k}$}
    \State $I^+ \gets I^+ \cup \Call{Sum}{i,l}$
\EndFor
\ForAll{$i \in \Call{Diff\_Negative}{m,k}$}
    \State $I^+ \gets I^+ \cup \Call{Diff\_Positive}{l,i}$
    \EndFor
\State \Return $I^+$
\EndFunction
\end{algorithmic}
\end{algorithm}

Now we define the method \textsc{is\_Hyperfield}, which verifies whether a given hyperstructure \(\HH\) is a hyperfield.
\begin{algorithm}[H]
\caption{Method \textsc{is\_Hyperfield} - verification of the hyperfield structure}
\begin{algorithmic}[1]
\Function{is\_Hyperfield}{\phantom{a}}
\Output
    \Desc{\texttt{true}}{\quad if and only if the hyperstructure \(\HH\) satisfies all conditions to be a hyperfield}
\EndOutput
\State $sum \gets G[0]$ \Comment{Step 1: Check condition \((kr_0)\) from Theorem~\ref{thm_real_construction}}
\For{$i = 1,2,\dots,N-1$}
    \State $sum \gets sum \cup G[i]$
\EndFor
\If{$sum \neq \{0, 1, \dots, N-1\}$}
    \State \Return $\boldsymbol{false}$
\EndIf
\Comment{Step 2: Check conditions \((hr_1)\) and \((hr_2)\) from Theorem~\ref{thm_real_construction}}
\For{$k = 0,1,\dots, N-1$}
    \For{$l = 0,1,\dots,N-1$}
        \If{$\Call{Sum}{0, k, l} \neq \Call{Sum}{0, l, k}$}
            \State \Return \texttt{false}
        \EndIf
        \If{$\Call{Sum\_PPM\_Pos}{0, k, l} \neq \Call{Sum\_PMP\_Pos}{0, l, k}$}
            \State \Return \texttt{false}
        \EndIf
    \EndFor
\EndFor
\State \Return \texttt{true}
\EndFunction
\end{algorithmic}
\end{algorithm}

Each hyperfield \(\HH(\underline{s})\) is completely characterized by a \((K+1)\)-tuple \(\underline{s} = (s_0, s_1, \dots, s_K)\).  
For instance, the hyperfields \(\HH_1\) and \(\HH_2\) from Example~\ref{ex_hyperfields} correspond to \(\HH(3,5)\) and \(\HH(5,6)\), respectively, in this notation.

The following methods allow for the reconstruction of \(\underline{s}\) from the hyperfield data.
\begin{algorithm}[H]
\caption{Reconstruction of the \((K+1)\)-tuple defining the hyperstructure}
\label{alg:get_tuple}
\begin{algorithmic}[1]
\Function{get\_number}{$N$, $I$}
    \Comment{Convert a subset $I \subseteq \{0,1,\dots,N-1\}$ to an integer}
    \State $number \gets 0$
    \ForAll{$i \in I$}
        \State $number \gets number + 2^i$
    \EndFor
    \State \Return $number$
\EndFunction
\vspace{0.3em}
\Statex
\Function{get\_tuple}{}
\Output
    \Desc{$\underline{v}$}{$(K+1)$-tuple defining a hyperstructure $\HH(\underline{v})$}
\EndOutput
\State Initialize empty $(K+1)$-tuple $\underline{v}$
\For{$i = 0$ to $K$}
    \State $I^+ \gets G[i]$
    \State $\underline{v}[i] \gets$ \Call{get\_number}{$N, I^+$}
\EndFor
\State \Return $\underline{v}$
\EndFunction
\end{algorithmic}
\end{algorithm}

To classify hyperfields up to isomorphism, we record the generated hyperfields, grouping them according to the C-characteristic (see Definition \ref{def_char}). 

The following method computes the C-characteristic of a hyperfield \(\HH(\underline{s})\), represented by the generating tuple \(\underline{s} = (s_0, s_1, \dots, s_K)\).
\begin{algorithm}[H]
\caption{Methods \textsc{C-characteristic} - computing the \texorpdfstring{$\mathrm{C-char}$}{C-characteristic} of a hyperfield}
\begin{algorithmic}[1]
\Function{C-characteristic}{\phantom{a}} 
\Output
    \Desc{${Cchar}$}{\qquad\quad C-characteristic of the hyperfield $\HH(\underline{s})$}
\EndOutput
\State $I^+ \gets G[0]$
\State $J^+ \gets \emptyset$
\State $Cchar \gets 1$   
\While{$0 \notin I^+$}
    \State $Cchar \gets Cchar + 1$    
    \ForAll{$i \in I^+$} 
            \State $J^+ \gets J^+ \cup G[i]$
    \EndFor
    \State $I^+ \gets J^+$
\EndWhile
\State \Return $Cchar$
\EndFunction
\end{algorithmic}
\end{algorithm}

To determine the isomorphism class of a given hyperfield \(\HH(\underline{s})\), we proceed according to the procedure described in Corollary \ref{cor:iso_classes}. This involves iterating over the elements of the multiplicative group \((\mathbb{Z}/N\mathbb{Z})^\times\) and applying the corresponding automorphisms to obtain all hyperfields isomorphic to \(\HH(\underline{s})\).

The algorithm below computes the isomorphism class of \(\HH(\underline{s})\), returning all \((K+1)\)-tuples corresponding to hyperfields isomorphic to the given one.
\begin{algorithm}[H]
\caption{Computation of the isomorphism class of a hyperfield}
\begin{algorithmic}[1]
\Procedure{Iso\_class}{$\HH$} 
\Input
    \Desc{${\HH}$}{\quad an object of class \texttt{hyperstructure}}
\EndInput
\Output
    \Desc{${Iso[\ ]}$}{\quad array of $(K+1)$-tuples representing all hyperfields isomorphic to $\HH$}
\EndOutput
\State $Iso[0] \gets \HH.\Call{get\_tuple}{}$
\For{$j = 2,3,\dots,N-1$}
    \If{$\gcd(j, N) = 1$}
        \State $j^{-1} \gets$ inverse of $j$ modulo $N$
        \State Initialize empty $(K+1)$-tuple $\underline{v}$
        \For{$i = 0$ to $K$}
            \State $I^+ \gets \emptyset$
            \ForAll{$m \in \HH.G\big[(i \cdot j^{-1}) \bmod N\big]$}
                \State $I^+ \gets I^+ \cup \big\{(m \cdot j) \bmod N\big\}$
            \EndFor
            \State $\underline{v}[i] \gets$ \Call{get\_number}{$N, I^+$}
        \EndFor
        \If{$\underline{v} \notin Iso[\ ]$}
            \State Append $\underline{v}$ to $Iso[\ ]$
        \EndIf
    \EndIf
\EndFor
\State \Return $Iso[\ ]$
\EndProcedure
\end{algorithmic}
\end{algorithm}

Below we present the main algorithm for generating all non-isomorphic real hyperfields with a given cyclic positive cone \(P\) of order \(N\).  
The algorithm iterates over all candidate \((K+1)\)-tuples \(\underline{s}\), where \(K = \lfloor N/2 \rfloor\), that potentially define hyperfields.  
For even \(N\), it first filters tuples using a parity condition (\textsc{Even\_Cond}).  
For each valid tuple \(\underline{s}\), it constructs the corresponding hyperstructure and checks if it satisfies the hyperfield axioms.  
If so, it computes the characteristic \(Cchar\) of the hyperfield and stores the isomorphism class of \(\HH(\underline{s})\) in the corresponding collection \(S[Cchar]\), ensuring no duplicates are saved.  
Each distinct isomorphism class is recorded as a new line in the respective file, facilitating classification and later analysis.
\begin{algorithm}[H]
\caption{Algorithm for generating real hyperfields with a cyclic positive cone}
\begin{algorithmic}[1]
\Input
    \Desc{$\boldsymbol{N}$}{the order of the cyclic positive cone \(P\)}
\EndInput
\Output
    \Desc{$\boldsymbol{S}$}{an array of files, each containing a completed list of \(\lfloor N/2 \rfloor\)-tuples representing all non-isomorphic real hyperfields with positive cone \(P\)}
\EndOutput
\Procedure{generating}{$N$} 
    \State $K \gets \lfloor N/2 \rfloor$
    \If{$N \bmod 2 = 0$}
        \ForAll{$\underline{s} \in \{1, 2, \dots, 2^N - 1\}^{K+1}$}
            \If{\Call{Even\_Cond}{$\underline{s}$} $= \texttt{false}$}
                \State \textbf{continue}
            \EndIf
            \State $\HH \gets$ \textbf{new} Hyperstructure($N, \underline{s}$)
            \If{$\HH.\Call{is\_Hyperfield}{} = \texttt{true}$}
                \State $Cchar \gets$ \Call{C-characteristic}{$\underline{s}$}
                \If{$\underline{s} \notin  S[Cchar]$}
                    \State Append $\Call{Iso\_class}{\HH}$ to $S[Cchar]$ \Comment{Add new isomorphism class to list}
                \State \texttt{write\_line}(file = $S[Cchar]$, line = $\Call{Iso\_class}{\HH}$) \Comment{Save as new line in corresponding file}
                \EndIf
            \EndIf
        \EndFor
    \Else
        \ForAll{$\underline{s} \in \{1, 2, \dots, 2^N - 1\}^{K+1}$}
            \State $\HH \gets$ \textbf{new} Hyperstructure($N, \underline{s}$)
            \If{$\HH.\Call{is\_Hyperfield}{} = \boldsymbol{true}$}
                \State $Cchar \gets$ \Call{C-characteristic}{$\underline{s}$}
                 \If{$\underline{s} \notin  S[Cchar]$}
                   \State Append $\Call{Iso\_class}{\HH}$ to $S[Cchar]$ \Comment{Add new isomorphism class to list}
                \State \texttt{write\_line}(file = $S[Cchar]$, line = $\Call{Iso\_class}{\HH}$) \Comment{Save as new line in corresponding file}
                \EndIf
            \EndIf
        \EndFor
    \EndIf
\EndProcedure
\end{algorithmic}
\end{algorithm}

For the reader's convenience, we present the algorithm that generates the hyperaddition table of the real hyperfield \(\HH(\underline{s})\).

Each subset of the set \(P\) is assigned a number from \(1\) to \(2^N-1\) and each subset of \(-P\) is assigned a number from \(-2^N+1\) to \(-1\), as described in the beginning of this chapter. The empty set is denoted by \(0\).

Thus, any subset \(A \subseteq -P \cup \{0\} \cup P\) can be represented by a triple \((\mathit{neg},\ \mathit{zero},\ \mathit{pos})\), where:
\begin{itemize}
    \item \(\mathit{neg}\) is the number assigned to the subset \(A \cap -P\),
    \item \(\mathit{pos}\) is the number assigned to the subset \(A \cap P\),
    \item \(\mathit{zero} = \texttt{true}\) if \(0 \in A\), and \texttt{false} otherwise.
\end{itemize}

\newpage

We now define the corresponding class that implements this representation.

\begin{algorithm}[H]
\caption{Entries class}
\begin{algorithmic}[1]
\State\textbf{class} Entries
\State \hspace{1em} \textbf{public:}
\State \hspace{2em} $neg$\quad negative number, representing $A\cap -P$  
\State \hspace{2em}  $zero$\quad $\texttt{true}$ if zero belongs to $A$ and $\texttt{false}$ otherwise
\State \hspace{2em} $pos$\quad positive number, representing $A\cap P$  
\vspace{0.1em}
\Statex
\State \hspace{1em} \textbf{Constructor}($n$: integer, $z$: bool, $p$: integer)
\State \hspace{2em} $neg\gets n$, \quad $zero\gets z$, \quad $pos\gets p$
\vspace{0.1em}
\Statex
\State \hspace{1em} \textbf{Method} \Call{swap}{\phantom{a}} \Comment{swap positive with negatice part}
\State \hspace{2em} $temp\gets -neg$, \quad $neg\gets -pos$, \quad $pos\gets temp$
\end{algorithmic}
\end{algorithm}

The following algorithm generates the table of hyperaddition.

\begin{algorithm}[H]
\caption{Generating the Hyperaddition Table of \(\HH(\ss)\)
 }
\begin{algorithmic}[1]
\Input
\Desc{$\boldsymbol{N}$}{the order of the cyclic positive cone $P$}
\Desc{$\boldsymbol{\underline{s}}$}{$(K+1)-$tuple $(s_0,\dots, s_K)$ representing the hyperfield $\HH(\underline{s})$}
\EndInput
\Output
\Desc{$\boldsymbol{Tab[\ ][\ ]}$}{\qquad\quad the $(2N+1)\times (2N+1)-$table of addition for the hyperfield $\HH(\ss)$}
\EndOutput
\Procedure{Table\_of\_hyperaddition}{$\boldsymbol{N}$, $\boldsymbol{\ss}$}
    \State $Tab[N][N]\gets$ \textbf{new}  \Call{Entries}{$0,\texttt{true}, 0$} \Comment{define entry for $0+0$}
    \For {$i=0,1,\dots, N-1$} \Comment{define entries for $-a^k+0$}
        \State $Tab[i][N]\gets$ \textbf{new} \Call{Entries}{$-2^{N-1-i}, \texttt{false}, 0$}
    \EndFor
    \For {$i=1,2,\dots, N$} \Comment{define entries for $0+a^k$}
        \State $Tab[N][i+N]\gets$ \textbf{new}  \Call{Entries}{$0, \texttt{false}, 2^{i-1} $}
    \EndFor
      \State $\HH\gets$ \textbf{new} \Call{Hyperstructure}{N,$\ss$}
    \For {$k=N+1,N+2,\dots, 2N$} \Comment{define entries for $a^{k-N-1}+a^{l-N-1}$}
        \For {$l=k,k+1,\dots, 2N$} 
            \State $I\gets \HH.\Call{Sum}{k-N-1, l-N-1}$
            \State $pos\gets \Call{get\_number}{N,I}$
             \State $Tab[k][l]\gets$ \textbf{new} \Call{Entries}{$0, \texttt{false}, pos$}
        \EndFor
    \EndFor
      \For {$k=0,1,\dots, N-1$} \Comment{define entries for $-a^{N-1-k}-a^{N-1-l}$}
        \For {$l=k,k+1,dots, N-1$} 
         \State $entry\gets Tab[2N-l][2N-k]$ 
             \State entry.\Call{swap}{\phantom{a}}
\algstore{tab}
\end{algorithmic}
\end{algorithm}
\begin{algorithm}                     
\begin{algorithmic} [1]                   
\algrestore{tab}
             \State $Tab[k][l]\gets entry$
        \EndFor
    \EndFor     
      \For {$k=0,1,\dots, N-1$} \Comment{define entries for $-a^{N-1-k}+a^{m-N-1}$}
        \For {$l=N+1,N+2,\dots, 2N$} 
            \State $I^-\gets \HH.$\Call{Diff\_negative}{$l-N-1, N-1-k$}
            \State $I^+\gets \HH.$\Call{Diff\_positive}{$l-N-1, N-1-k$}
            \State $neg \gets$ \Call{get\_number}{$N,I^-$}
            \State $pos \gets$ \Call{get\_number}{$N,I^+$}
            \State $zero \gets \texttt{false} $
            \If{$k+l=2N$} \Comment{In this case we have $-a^s+a^s$}
                \State $zero \gets \boldsymbol{true}$
            \EndIf
            \State $Tab[k][l]\gets$ \textbf{new} \Call{Entries}{$-neg, \texttt{false}, pos$}
        \EndFor
    \EndFor
     \For {$l=0,1,\dots, 2N$} \Comment{setting values symmetrically with respect to the diagonal}
        \For {$k=l+1,l+2,\dots, 2N$} 
            \State $Tab[k][l]\gets Tab[l][k]$ 
        \EndFor
    \EndFor
\EndProcedure
\end{algorithmic}
\end{algorithm}

\section{Classification results}

 This section presents quantitative results on real hyperfields with a cyclic positive cone \(P\). The full list of the algorithm's results can be found at \url{https://zenodo.org/records/16737218}

In the table below, \(N\) denotes the order of the cyclic group \(P\). The cardinality of the corresponding hyperfield with positive cone \(P\) is equal to \(2N + 1\).

 \renewcommand{\arraystretch}{1.3}
\begin{center}
\begin{tabular}{|>{\centering\arraybackslash}p{3.5cm}|>{\centering\arraybackslash}p{1cm}|>{\centering\arraybackslash}p{1cm}|>{\centering\arraybackslash}p{1cm}|>{\centering\arraybackslash}p{1cm}|>{\centering\arraybackslash}p{1.1cm}|>{\centering\arraybackslash}p{1.4cm}|
>{\centering\arraybackslash}p{1.6cm}|} \hline
\rowcolor{gray!60}
\textbf{$\boldsymbol{N}$} & 1 & 2 & 3 & 4 & 5 & 6 & 7\\ \hline
Order of hyperfields & 3 & 5 & 7 & 9 & 11 & 13 & 15 \\ \hline
Cases analyzed by the algorithm & 1 & 9 & 49 & 3375 & 29\,791 & 15\,752\,961 & 260\,144\,641
 \\ \midrule[1.5pt]
Number of hyperfields & 1 & 2 & 11 & 30 & 2\,015 & 49\,321 & 8\,594\,490\\ \hline
Hyperfields (in \%) & 100 & 22.22 & 22.45 & 0.89 & 6.76 & 0.31 & 3.30 \\ \hline
Hyperfields with $\CChar = 1$ & 1 & 2 & 9 & 26 & 1\,469 & 35\,700  & 5\,895\,999 \\ \hline
Hyperfields with $\CChar = 2$ & 0 & 0 & 2 & 4 & 546 & 13\,621  & 2\,698\,059\\  \hline
Hyperfields with $\CChar = 3$ & 0 & 0 & 0 & 0 & 0 & 0 & 432 \\  \midrule[1.5pt]
Number of isomorphism classes & 1 & 2 & 8 & 20 & 521 & 24\,750 & 1\,032\,620\\ \hline
Isomorphism classes (in \%)  & 100 & 22.22 & 16.32 & 0.59 & 1.75 & 0.16 & 0.397 \\ \hline
Isomorphism classes with $\CChar = 1$ & 1 & 2 & 6 & 17 & 380 & 17\,915 & 981\,522\\ \hline
Isomorphism classes with $\CChar = 2$ & 0 & 0 & 2 & 3 & 141 & 6\,835 & 51\,022\\ \hline
Isomorphism classes with $\CChar = 3$ & 0 & 0 & 0 & 0 & 0 & 0 & 76\\ \hline
\end{tabular}
\end{center}
\bigskip

For \(N > 7\), the generation algorithm remains valid; however, the number of real hyperfields increases substantially, rendering the algorithm computationally intensive.  
Consequently, starting from \(N = 8\), we restrict our analysis to real hyperfields whose C-characteristic exceeds \(2\).  
The table below summarizes the count of non-isomorphic hyperfields with C-characteristic equal to \(3\).  
In this case, no hyperfields with higher C-characteristics are found.

 \renewcommand{\arraystretch}{1.3}
\begin{center}
\begin{tabular}{|>{\centering\arraybackslash}p{3.5cm}|>{\centering\arraybackslash}p{1cm}|>{\centering\arraybackslash}p{1cm}|>{\centering\arraybackslash}p{1cm}|>{\centering\arraybackslash}p{1cm}|>{\centering\arraybackslash}p{1cm}|>{\centering\arraybackslash}p{1cm}|
>{\centering\arraybackslash}p{1cm}|>{\centering\arraybackslash}p{1cm}|} \hline
\rowcolor{gray!60}
\textbf{$\boldsymbol{N}$} & 1 & 2 & 3 & 4 & 5 & 6 & 7&8\\ \hline
Order of hyperfields & 3 & 5 & 7 & 9 & 11 & 13 & 15 &17\\ \hline
Isomorphism classes with $\CChar = 3$ & 0 & 0 & 0 & 0 & 0 & 0 & 76 & 190\\ \hline
\end{tabular}
\end{center}\bigskip

The first hyperfield with C-characteristic 2 appears for a positive cone \(P\) of order 3.  An example of such a hyperfield is presented below.

\begin{ex}
    Let $P=\{1,a,a^2\}$ be a positive cone of hyperfield $\HH(6, 3)$. Then the table of hyperaddition has the following form.
    \begin{center}
    \small
\begin{tabular}{|c|c|c|c|c|c|c|c|} \hline
$+$ &$-a^2$&$-a $&$-1 $&$0$&$1 $&$a $&$a^2$\\\hline
$-a^2$&$\{-a,-1\}$&$\{-a^2,-a\}$&$\{-a^2,-1\}$&$\{-a^2\}$&$\{\pm 1,\pm a^2\}$&$\{\pm a,\pm a^2\}$&$\{0,\pm 1,\pm a\}$\\ \hline
$-a $&$\{-a^2,-a\}$&$\{-a^2,-1\}$&$\{-a,-1\}$&$\{-a\}$&$\{\pm 1,\pm a\}$&$\{0,\pm 1, \pm a^2\}$&$
\{\pm a,\pm a^2\}$\\ \hline
$-1 $&$\{-a^2,-1\}$&$\{-a,-1\}$&$\{-a^2,-a\}$&$\{-1\}$&$\{0,\pm a, \pm a^2\}$&$\{\pm 1,\pm a\}$&$
\{\pm 1,\pm a^2\}$\\ \hline
$0$ &$\{-a^2\}$&$\{-a\}$&$\{-1\}$&$\{0\}$&$\{1\}$&$\{a\}$&$\{a^2\}$\\ \hline
$1 $&$\{\pm 1, \pm a^2\}$&$\{\pm 1,\pm a\}$&$\{0,\pm a,\pm a^2\}$&$\{1\}$&$\{a,a^2\}$&$\{1,a\}$&$\{1,a^2\}$\\ \hline
$a $&$\{\pm a,\pm a^2\}$&$\{0,\pm 1,\pm a^2\}$&$\{\pm 1,\pm a\}$&$\{a\}$&$\{1,a\}$&$\{1,a^2\}$&$\{a,a^2\}$\\ \hline
$a^2 $&$\{0,\pm 1,\pm a\}$&$\{\pm a,\pm a^2\}$&$\{\pm 1,\pm a^2\}$&$\{a^2\}$&$\{1,a^2\}$&$\{a,a^2\}$&$\{1,a\}$\\ \hline
\end{tabular}
    \end{center}\medskip
The isomorphism class of this hyperfield is a singleton. 
\end{ex}

The first hyperfield with C-characteristic 3 occurs for a positive cone \(P\) of order 7.  
An example of such a hyperfield is presented below.

\begin{ex}
    Let \( P = \{1, a, \dots, a^6\} \) be a positive cone of the hyperfield \(\HH(104, 61, 27, 30)\). 
    The isomorphism class of this hyperfield contains 6 hyperfields:
\begin{align*}
    \Big[\HH(104, 61, 27, 30)\Big] = \Big\{& \HH(104, 61, 27, 30),\ \HH(104, 45, 91, 60),\ \HH(22, 90, 108, 103), \\
    & \HH(104, 53, 89, 62),\ \HH(22, 106, 110, 99),\ \HH(22, 122, 102, 71) \Big\}
\end{align*}

Below we present part of the table for the hyperaddition of the hyperfield \(\HH(104, 61, 27, 30)\). To complete the missing part of this table, the reader can use the axioms of a hyperfields.
\end{ex}

\clearpage
\thispagestyle{empty}
\begin{landscape}
\begin{adjustwidth}{-4cm}{0cm}
\tiny

\setlength{\tabcolsep}{2pt}
\renewcommand{\arraystretch}{2}

\begin{tabular}{|>{\centering\arraybackslash}p{1cm}|
>{\centering\arraybackslash}p{1cm}|>{\centering\arraybackslash}p{3.17cm}|>{\centering\arraybackslash}p{3.17cm}|>{\centering\arraybackslash}p{3.17cm}|>{\centering\arraybackslash}p{3.28cm}|>{\centering\arraybackslash}p{3.17cm}|>{\centering\arraybackslash}p{3.17cm}|>{\centering\arraybackslash}p{3.17cm}|}
\hline
$+$ & $0$ & $1$ & $a$ & $a^2$ & $a^3$ & $a^4$ & $a^5$ & $a^6$ \\\hline
$-a^6$ & $\{-a^6\}$ & $H^*\!-\!\{-a,-a^2,-a^3,1,a^6\}$ & $\{-a,-a^6,1,\pm a^2,a^4,\pm a^5\}$ & $H^*\!-\!\{-1,-a^2,-a^6,a^3,a^4\}$ & $H^*\!-\!\{-1,-a,a^3,a^4,a^6\}$ & $\{-a^2,-a^5,\pm 1,\pm a^3,a^4,a^6\}$ & $H^*\!-\!\{-a^5,-a^6,1,a,a^2\}$ & $\{0,\pm 1,\pm a,\pm a^3\}$ \\\hline
$-a^5$ & $\{-a^5\}$ & $\{-1,-a^5,\pm a,a^3,\pm a^4,a^6\}$ & $H^*\!-\!\{-a,-a^5,-a^6,a^2,a^3\}$ & $H^*\!-\!\{-1,-a^6,a^2,a^3,a^5\}$ & $\{-a,-a^4,\pm a^2,a^3,a^5,\pm a^6\}$ & $H^*\!-\!\{-a^4,-a^5,1,a,a^6\}$ & $\{0,\pm 1,\pm a^2,\pm a^6\}$ & $H^*\!-\!\{-1,-a,-a^2,a^5,a^6\}$ \\\hline
$-a^4$ & $\{-a^4\}$ & $H^*\!-\!\{-1,-a^4,-a^5,a,a^2\}$ & $H^*\!-\!\{-a^5,-a^6,a,a^2,a^4\}$ & $\{-1,-a^3,\pm a,a^2,a^4,\pm a^5\}$ & $H^*\!-\!\{-a^3,-a^4,1,a^5,a^6\}$ & $\{0,\pm a,\pm a^5,\pm a^6\}$ & $H^*\!-\!\{-1,-a,-a^6,a^4,a^5\}$ & $\{-a^4,-a^6,\pm 1,a^2,\pm a^3,a^5\}$ \\\hline
$-a^3$ & $\{-a^3\}$ & $H^*\!-\!\{-a^4,-a^5,1,a,a^3\}$ & $\{-a^2,-a^6,\pm 1,a,a^3,\pm a^4\}$ & $H^*\!-\!\{-a^2,-a^3,a^4,a^5,a^6\}$ & $\{0,\pm 1,\pm a^4,\pm a^5\}$ & $H^*\!-\!\{-1,-a^5,-a^6,a^3,a^4\}$ & $\{-a^3,-a^5,a,\pm a^2,a^4,\pm a^6\}$ & $H^*\!-\!\{-a^3,-a^4,-a^6,1,a\}$ \\\hline
$-a^2$ & $\{-a^2\}$ & $\{-a,-a^5,1,a^2,\pm a^3,\pm a^6\}$ & $H^*\!-\!\{-a,-a^2,a^3,a^4,a^5\}$ & $\{0,\pm a^3,\pm a^4,\pm a^6\}$ & $H^*\!-\!\{-a^4,-a^5,-a^6,a^2,a^3\}$ & $\{-a^2,-a^4,1,\pm a,a^3,\pm a^5\}$ & $H^*\!-\!\{-a^2,-a^3,-a^5,1,a^6\}$ & $H^*\!-\!\{-a^3,-a^4,1,a^2,a^6\}$ \\\hline
$-a$ & $\{-a\}$ & $H^*\!-\!\{-1,-a,a^2,a^3,a^4\}$ & $\{0,\pm a^2,\pm a^3,\pm a^5\}$ & $H^*\!-\!\{-a^3,-a^4,-a^5,a,a^2\}$ & $\{-a,-a^3,\pm 1,a^2,\pm a^4,a^6\}$ & $H^*\!-\!\{-a,-a^2,-a^4,a^5,a^6\}$ & $H^*\!-\!\{-a^2,-a^3,a,a^5,a^6\}$ & $\{-1,-a^4,a,\pm a^2,\pm a^5,a^6\}$ \\\hline
$-1$ & $\{-1\}$ & $\{0,\pm a,\pm a^2,\pm a^4\}$ & $H^*\!-\!\{-a^2,-a^3,-a^4,1,a\}$ & $\{-1,-a^2,a,\pm a^3,a^5,\pm a^6\}$ & $H^*\!-\!\{-1,-a,-a^3,a^4,a^5\}$ & $H^*\!-\!\{-a,-a^2,1,a^4,a^5\}$ & $\{-a^3,-a^6,1,\pm a,\pm a^4,a^5\}$ & $H^*\!-\!\{-1,-a^6,a,a^2,a^3\}$ \\\hline
$0$ & $\{0\}$ & $\{1\}$ & $\{a\}$ & $\{a^2\}$ & $\{a^3\}$ & $\{a^4\}$ & $\{a^5\}$ & $\{a^6\}$ \\\hline
$1$ & $\{1\}$ & $\{a^3,a^5,a^6\}$ & $\{1,a^2,a^3,a^4,a^5\}$ & $\{1,a,a^3,a^4\}$ & $\{a,a^2,a^3,a^4\}$ & $\{1,a,a^5,a^6\}$ & $\{a,a^2,a^5,a^6\}$ & $\{a,a^2,a^3,a^4,a^6\}$ \\\hline
$a$ & $\{a\}$ & $\{1,a^2,a^3,a^4,a^5\}$ & $\{1,a^4,a^6\}$ & $\{a,a^3,a^4,a^5,a^6\}$ & $\{a,a^2,a^4,a^5\}$ & $\{a^2,a^3,a^4,a^5\}$ & $\{1,a,a^2,a^6\}$ & $\{1,a^2,a^3,a^6\}$ \\\hline
$a^2$ & $\{a^2\}$ & $\{1,a,a^3,a^4\}$ & $\{a,a^3,a^4,a^5,a^6\}$ & $\{1,a,a^5\}$ & $\{1,a^2,a^4,a^5,a^6\}$ & $\{a^2,a^3,a^5,a^6\}$ & $\{a^3,a^4,a^5,a^6\}$ & $\{1,a,a^2,a^3\}$ \\\hline
$a^3$ & $\{a^3\}$ & $\{a,a^2,a^3,a^4\}$ & $\{a,a^2,a^4,a^5\}$ & $\{1,a^2,a^4,a^5,a^6\}$ & $\{a,a^2,a^6\}$ & $\{1,a,a^3,a^5,a^6\}$ & $\{1,a^3,a^4,a^6\}$ & $\{1,a^4,a^5,a^6\}$ \\\hline
$a^4$ & $\{a^4\}$ & $\{1,a,a^5,a^6\}$ & $\{a^2,a^3,a^4,a^5\}$ & $\{a^2,a^3,a^5,a^6\}$ & $\{1,a,a^3,a^5,a^6\}$ & $\{1,a^2,a^3\}$ & $\{1,a,a^2,a^4,a^6\}$ & $\{1,a,a^4,a^5\}$ \\\hline
$a^5$ & $\{a^5\}$ & $\{a,a^2,a^5,a^6\}$ & $\{1,a,a^2,a^6\}$ & $\{a^3,a^4,a^5,a^6\}$ & $\{1,a^3,a^4,a^6\}$ & $\{1,a,a^2,a^4,a^6\}$ & $\{a,a^3,a^4\}$ & $\{1,a,a^2,a^3,a^5\}$ \\\hline
$a^6$ & $\{a^6\}$ & $\{a,a^2,a^3,a^4,a^6\}$ & $\{1,a^2,a^3,a^6\}$ & $\{1,a,a^2,a^3\}$ & $\{1,a^4,a^5,a^6\}$ & $\{1,a,a^4,a^5\}$ & $\{1,a,a^2,a^3,a^5\}$ & $\{a^2,a^4,a^5\}$ \\\hline
\end{tabular}\medskip

\normalsize 
\begin{center}\normalsize
The table of hyperaddition of the hyperfield $\HH(104, 61, 27, 30)$. 
\end{center}

\end{adjustwidth}
\end{landscape}


\end{document}